\theoremstyle{plain} 
\newtheorem{theorem}{\indent\sc Theorem}[section]
\newtheorem{lemma}[theorem]{\indent\sc Lemma}
\newtheorem{corollary}[theorem]{\indent\sc Corollary}
\newtheorem{proposition}[theorem]{\indent\sc Proposition}
\newtheorem{claim}[theorem]{\indent\sc Claim}
\theoremstyle{definition} 
\newtheorem{definition}[theorem]{\indent\sc Definition}
\newtheorem{remark}[theorem]{\indent\sc Remark}
\newtheorem{question}[theorem]{\indent\sc Question}
\begin{document}
\pagestyle{plain}
\thispagestyle{plain}

\title[Higher codimensional Ueda theory]
{Higher codimensional Ueda theory for a compact submanifold with unitary flat normal bundle}

\author[Takayuki KOIKE]{Takayuki KOIKE}
\address{ 
Department of Mathematics, Graduate School of Science, Kyoto University, Kyoto 606-8502, Japan. 
}
\email{tkoike@math.kyoto-u.ac.jp}

\subjclass[2010]{ 
Primary 32J25; Secondary 14C20. 
}
\keywords{ 
Unitary flat bundles, Ueda's theory. 
}

\begin{abstract}
Let $Y$ be a compact complex manifold embedded in a complex manifold with unitary flat normal bundle. 
Our interest is in a sort of the linearizability problem of a neighborhood of $Y$. 
As a higher-codimensional generalization of Ueda's result, 
we give a sufficient condition for the existence of a non-singular holomorphic foliation on a neighborhood of $Y$ which includes $Y$ as a leaf with unitary-linear holonomy. 
We apply this result to the existence problem of a smooth Hermitian metric with semi-positive curvature on a nef line bundle. 
\end{abstract}

\maketitle

\section{Introduction}

Let $X$ be a complex manifold and $Y$ be a complex submanifold of codimension $r$. 
Our interest is in the analytic structure of a neighborhood of $Y$ when the normal bundle $N_{Y/X}$ is unitary flat. 
We say that a holomorphic vector bundle $E$ on $Y$ is {\it unitary flat} if $E\in {\rm Image}\,(H^1(Y, U(r))\to H^1(Y, GL_r(\mathcal{O}_Y)))$, or equivalently, the transition matrices of $E$ can be chosen to be $U(r)$-valued locally constant functions, where $U(r)$ is the set of $r\times r$ unitary matrices. 
A unitary flat vector bundle $E$ admits a flat connection whose monodromy $\rho_{E}$ is a unitary representation of the fundamental group $\pi_1(Y, *)$ of $Y$ (see \S \ref{section:unitary_flat} for the details). 
Our interest is in a sort of the {\it linearizability problem} of a neighborhood of $Y$. 
In other words, we are interested in comparing a neighborhood of $Y$ in $X$ and of the zero section in $N_{Y/X}$.   
One main goal of this paper is to investigate the existence of a holomorphic foliation $\mathcal{F}$ of codimension $r$ on a neighborhood of $Y$ which includes $Y$ as a leaf with ${\rm Hol}_{\mathcal{F}, Y}=\rho_{N_{Y/X}}$, 
where ${\rm Hol}_{\mathcal{F}, Y}$ is the holonomy of $\mathcal{F}$ along $Y$. 

In \cite{A}, Arnol'd studied a neighborhood of an elliptic curve $Y$ by applying a linearizing technique as in \cite{S}. 
In \cite{U}, Ueda studied the case where $Y$ is any compact complex curve and $r=1$. 
For such a pair $(Y, X)$, Ueda posed so-called the {\it Ueda class} $u_n(Y, X)\in H^1(Y, N_{Y/X}^{-n}):=H^1(Y, \mathcal{O}_Y(N_{Y/X}^{\otimes-n}))$ as an obstruction of the linearization of a neighborhood of $Y$ in $n$-jet along $Y$ ($n\geq 1$). 
The pair $(Y, X)$ is of {\it infinite type} if $u_n(Y, X)=0$ holds for each $n\geq 1$. 
When $(Y, X)$ is of infinite type, Ueda generalized the result of Arnol'd. 
He showed that an infinite type pair $(Y, X)$ admits the foliation $\mathcal{F}$ as above if $N_{Y/X}$ is a torsion element of $\mathcal{P}(Y):={\rm Image}\,(H^1(Y, U(1))\to H^1(Y, \mathcal{O}_Y^*))$, or satisfies the following Diophantine-type condition: 
there exists a constant $A>0$ such that $d(\mathbb{I}^{(1)}_Y, N_{Y/X}^m)\geq (2m)^{-A}$ holds for any $m\geq 1$. 
Here $\mathbb{I}^{(1)}_Y$ is the holomorphically trivial line bundle on $Y$ and 
$d$ is an invariant distance on $\mathcal{P}(Y)$ (\cite[Theorem 3]{U}, see \cite[\S 4.1]{U} for the details). 
Note that the proof of \cite[Theorem 3]{U} works not only when $Y$ is a curve, but also when $Y$ is a compact complex manifold of arbitrary dimension. 

We generalize Ueda's theory to the case where the codimension $r$ of $Y$ is greater than $1$. 
We will define the obstruction class $u_n(Y, X)\in H^1(Y, N_{Y/X}\otimes S^{n+1}N_{Y/X}^*)$ as a straightforward generalization of the Ueda class and generalize \cite[Theorem 3]{U} to the pair $(Y, X)$ of infinite type with $N_{Y/X}\in \mathcal{E}_0^{(r)}(Y)\cup \mathcal{E}_1^{(r)}(Y)$. 
Here we denote by $S^{n+1}N_{Y/X}^*$ the $n+1$-th symmetric tensor bundle, $\mathcal{E}_0^{(r)}(Y)$ the set $\{E_\rho \mid \#({\rm Image}\,\rho)<\infty\}$, 
and by $\mathcal{E}_1^{(r)}(Y)$ the set 
\[
\left\{E_\rho\left| \pi^*E_\rho\in \mathcal{S}^{(r)}_A(\widetilde{Y})\ {\rm for}\ {\rm some}\ {\rm finite}\ {\rm normal}\ {\rm covering}\ \pi\colon\widetilde{Y}\to Y\ {\rm and}\ A>0\right.\right\}, 
\]
where $E_\rho$ is the unitary flat vector bundle of rank $r$ which corresponds to a unitary representation $\rho$ of $\pi_1(Y, *)$ (see \S \ref{section:unitary_flat} for the correspondence) and 
$\mathcal{S}^{(r)}_A(\widetilde{Y})$ is the set 
\[
\left\{\bigoplus_{\lambda=1}^rL_\lambda\left| L_\lambda\in \mathcal{P}(\widetilde{Y}),\ d\left(\mathbb{I}_{\widetilde{Y}}^{(1)},\ \textstyle\bigotimes_{\lambda=1}^r L_\lambda^{a_\lambda}\right)\geq \frac{1}{\left(2|a|\right)^A}\ {\rm for}\ a=(a_\lambda)_\lambda\in\mathbb{Z}^r\ {\rm with}\ |a|\geq 1 \right\}\right.
\]
($|a|:=a_1+a_2+\cdots+a_r$). 

\begin{theorem}\label{thm:main}
Let $X$ be a complex manifold and $Y$ be a complex submanifold of codimension $r$ with $N_{Y/X}\in\mathcal{E}_0^{(r)}(Y)\cup\mathcal{E}_1^{(r)}(Y)$. 
Assume that the pair $(Y, X)$ is of infinite type (i.e. $u_n(Y, X)=0$ for each $n\geq 1$). 
Then the following holds: \\
$(i)$ There exists a non-singular holomorphic foliation $\mathcal{F}$ of codimension $r$ on some neighborhood $V$ of $Y$ which includes $Y$ as a leaf with ${\rm Hol}_{\mathcal{F}, Y}=\rho_{N_{Y/X}}$. \\
$(ii)$ For each hypersurface $S$ such that $Y\subset S$ and $N_{Y/S}$ is unitary flat, there exists a non-singular holomorphic foliation $\mathcal{G}_S$ of codimension $1$ on $V$ with the following properties by shrinking $V$ if necessary: 
$\mathcal{G}_S$ includes $S\cap V$ as a leaf with $U(1)$-linear holonomy, 
and each leaf of $\mathcal{F}$ is holomorphically immersed into a leaf of $\mathcal{G}_S$. 
Especially, $S\cap V$ is the union of leaves of $\mathcal{F}$. 
\end{theorem}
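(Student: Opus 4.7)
The strategy is to emulate Ueda's two-step scheme in higher codimension: build formal foliation coordinates along $Y$ order by order using the vanishing of the Ueda obstructions, then upgrade them to convergent ones using the arithmetic hypothesis on $N_{Y/X}$.

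For the formal step, I would choose a locally finite cover $\{U_\alpha\}$ of $Y$ and trivializations of $N_{Y/X}$ with locally constant transition matrices $t_{\alpha\beta}\in U(r)$. Extending these to holomorphic fibre coordinates $w_\alpha$ on a tubular neighborhood so that $w_\alpha=t_{\alpha\beta}w_\beta+O(|w_\beta|^2)$, the assumption $u_n(Y,X)=0\in H^1(Y,N_{Y/X}\otimes S^{n+1}N_{Y/X}^*)$ for every $n\geq 1$ permits inductive modifications of $w_\alpha$ by polynomials of degree $n+1$ in $w_\alpha$ that improve the transition law to $w_\alpha=t_{\alpha\beta}w_\beta+O(|w_\beta|^{n+2})$. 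Iterating yields a formal system $\widehat{w}_\alpha$ satisfying $\widehat{w}_\alpha=t_{\alpha\beta}\widehat{w}_\beta$ as formal power series, whose formal leaves $\{\widehat{w}_\alpha=\mathrm{const}\}$ carry holonomy $\rho_{N_{Y/X}}$.

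The convergence step is the analytic heart. I would pass to a finite normal cover $\pi\colon\widetilde{Y}\to Y$: when $N_{Y/X}\in\mathcal{E}_0^{(r)}(Y)$, choose $\pi$ so that $\pi^*N_{Y/X}$ is holomorphically trivial; when $N_{Y/X}\in\mathcal{E}_1^{(r)}(Y)$, choose $\pi$ so that $\pi^*N_{Y/X}=\bigoplus_{\lambda=1}^r L_\lambda$ with $\{L_\lambda\}$ witnessing membership in $\mathcal{S}^{(r)}_A(\widetilde{Y})$. Upstairs, the order-$n$ Cousin-type equation decomposes into weight spaces indexed by multi-indices $a$ with $|a|=n+1$, and the coboundary operator to be inverted has norm controlled by $d(\mathbb{I}_{\widetilde{Y}}^{(1)},\bigotimes_\lambda L_\lambda^{a_\lambda})^{-1}\leq (2|a|)^A$. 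A Siegel-type majorant argument paralleling Ueda's rank-$1$ theorem then shows that the pulled-back formal coordinates define genuine holomorphic functions on a tubular neighborhood of $\pi^{-1}(Y)$, and invariance of the construction under the deck group of $\pi$ descends the foliation to $X$; the torsion case is handled by a direct Cauchy estimate with no small denominators.

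For $(ii)$, the short exact sequence of normal bundles identifies $N_{S/X}|_Y$ with the unitary flat line quotient $N_{Y/X}/N_{Y/S}$, and a local defining function of $S$, restricted to transversal fibres, yields a section of this line bundle. Composing with the convergent coordinates $\widehat{w}_\alpha$ from $(i)$ produces, after a locally constant unitary change of frame, a single holomorphic function $f_\alpha$ on $V\cap U_\alpha$ with transition law $f_\alpha=c_{\alpha\beta}f_\beta$ for a $U(1)$-valued locally constant cocycle, whose zero locus coincides with $S$ to infinite order along $Y$ and hence equals $S\cap V$ by the identity theorem. The level sets $\{f_\alpha=\mathrm{const}\}$ define $\mathcal{G}_S$, and every leaf of $\mathcal{F}$ lies in a single level set of $f_\alpha$ by construction. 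The principal obstacle of the whole proof is the convergence step when $N_{Y/X}\in\mathcal{E}_1^{(r)}(Y)$: in rank $r>1$ the small denominators at order $n$ are spread across the entire weight set $|a|=n+1$, so the cancellation patterns of Ueda's rank-$1$ Siegel estimate must be reproduced uniformly in the multi-index $a$, and the finite cover $\pi$ must be chosen compatibly with both the Diophantine bound upstairs and the equivariance needed to descend the resulting foliation back to $Y$.
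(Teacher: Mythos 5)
Your outline of part $(i)$ follows the paper's architecture: a formal induction in which the vanishing of $u_n(Y,X)$ lets you solve a coboundary equation in $\check{Z}^1(\{U_j\}, N_{Y/X}\otimes S^{n+1}N_{Y/X}^*)$ at each order, followed by passage to a finite normal cover where the normal bundle is holomorphically trivial (case $\mathcal{E}_0^{(r)}$) or splits into line bundles satisfying the Diophantine bound (case $\mathcal{E}_1^{(r)}$), and a Siegel-type majorant series to force convergence; the paper packages the formal data as the coefficients $F_{j,\alpha}$ of a single functional equation $w_j=u_j+\sum_{|\alpha|\geq 2}F_{j,\alpha}(z_j)u_j^\alpha$, which amounts to the same thing. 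However, your descent step has a real gap: you appeal to ``invariance of the construction under the deck group,'' but the construction is not equivariant --- at each order one \emph{chooses} a solution of a coboundary equation, and nothing forces these choices to be compatible with the deck action. The paper resolves this not by equivariance but by averaging: it sets $(\pi|_{\widetilde{V}_j})^*u_j=\frac{1}{d}\sum_{\nu=1}^d i_\nu^*\widetilde{u}_j$ over the deck transformations $i_\nu$ and checks that the averaged system is still a defining system with $du_j|_{U_j}=e_j$ and $u_j=T_{jk}u_k$. You also need, and do not mention, the fact that infinite type is preserved under pullback to the cover (injectivity of $\pi^*$ on the relevant $H^1$).

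The more serious gap is in $(ii)$. You take the coordinates produced in $(i)$ and compose with a defining function of $S$, asserting that the resulting $f_\alpha$ agrees with $S$ to infinite order along $Y$. There is no reason for this: the foliation in $(i)$ was constructed with no reference to $S$, so the hypersurface $\{u_\alpha^1=0\}$ (after the unitary change of frame splitting off $N_{S/X}|_Y$) is merely \emph{some} hypersurface through $Y$ with the correct conormal direction; that it contains $S\cap V$, equivalently that $S\cap V$ is a union of leaves of $\mathcal{F}$, is precisely the content to be proved, not a consequence of the identity theorem. The paper's fix is to rerun the whole construction with an initial system adapted to $S$: using the splitting of $0\to N_{Y/S}\to N_{Y/X}\to N_{S/X}|_Y\to 0$ (complete reducibility of unitary representations) together with an extension-class computation, one arranges $w_j^1$ to be a defining function of $S\cap V_j$ with $dw_j=T_{jk}dw_k$ and $T_{jk}$ block-diagonal; then at every order $n$ one must additionally verify that the coefficients can be chosen with $F^1_{j,\alpha}\equiv 0$ whenever $\alpha_1=0$, which requires checking that the corresponding components $h_{1,jk,\alpha}^1-h_{2,jk,\alpha}^1$ of the right-hand side vanish for such $\alpha$. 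Only then does $w_j^1=u_j^1\cdot(1+O(|u_j|))$ hold, giving $\{u_j^1=0\}=S\cap V_j$; and a separate well-definedness argument is still needed to identify the foliation so obtained with the one from $(i)$.
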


Note that the assertion $(i)$ in Theorem \ref{thm:main} implies the existence of a deformation family of $Y$ when $N_{Y/X}\in\mathcal{E}_0^{(r)}(Y)$. 
Note also that, for each unitary flat subbundle $E\subset N_{Y/X}$, the assertion $(i)$ implies the existence of a submanifold $Z\subset V$ with $Y\subset Z$ and $N_{Y/Z}=E$ (see Remark \ref{rmk:unitary_flat_subbundle}). 
Especially, when $N_{Y/X}$ admits a direct decomposition to $r$ unitary flat line bundles, it is observed from Theorem \ref{thm:main} $(i)$ that $Y$ is realized as a transversal intersection of $r$ non-singular hypersurfaces of a neighborhood of $Y$ in $X$ if $N_{Y/X}\in\mathcal{E}_0^{(r)}(Y)\cup\mathcal{E}_1^{(r)}(Y)$ and the pair $(Y, X)$ is of infinite type. 
As the pair $(Y, X)$ is always of infinite type when $H^1(Y, N_{Y/X}\otimes S^{n+1}N_{Y/X}^*)=0$ for each $n\geq 1$, Theorem \ref{thm:main} $(i)$ can be applied to the case where, for example, $Y$ is an elliptic curve and $N_{Y/X}\in\mathcal{E}^{(r)}_1(Y)$. 
In this sense, Theorem \ref{thm:main} $(i)$ can be also regarded as a generalization of the result of Arnol'd for elliptic curves. 

Theorem \ref{thm:main} $(ii)$ can be applied to the semi-positivity problem (the existence problem of a $C^\infty$ Hermitian metric with semi-positive curvature) on a holomorphic line bundle, 
since the assertion $(ii)$ implies the unitary flatness of the line bundle $[S]$ on $V$. 

\begin{corollary}\label{cor:main}
Let $X$ be a complex manifold of dimension $n$ and $L$ be a holomorphic line bundle on $X$. 
Take $D_1, D_2, \dots, D_{n-1}\in |L|$. 
Assume that $C:=\textstyle\bigcap_{\lambda=1}^{n-1} D_\lambda$ is a smooth elliptic curve, $L|_C\in \mathcal{E}_1^{(1)}(C)$, and $\{D_\lambda\}_{\lambda=1}^{n-1}$ intersects transversally along $C$. 
Then $L$ is semi-positive (i.e. $L$ admits a $C^\infty$ Hermitian metric with semi-positive curvature). 
\end{corollary}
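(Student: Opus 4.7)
The plan is to apply Theorem \ref{thm:main} to the pair $(C, X)$, which produces a flat (curvature-zero) Hermitian metric $h_V$ on $L$ over a tubular neighborhood $V$ of $C$, and then to patch $h_V$ with the semi-positive singular metric $h_{\mathrm{FS}}$ built from the Kodaira-type map $[s_1:\cdots:s_{n-1}]\colon X\setminus C\to \mathbb{P}^{n-2}$, where $s_\lambda\in H^0(X, L)$ is a defining section of $D_\lambda$.

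First I would check the hypotheses of Theorem \ref{thm:main} for $Y=C$. Transversality gives
\[
N_{C/X}\cong \bigoplus_{\lambda=1}^{n-1} N_{D_\lambda/X}|_C \cong (L|_C)^{\oplus(n-1)},
\]
and since every $\mathbb{Z}$-combination of the summands is a single tensor power of $L|_C$, the Diophantine data witnessing $L|_C\in \mathcal{E}_1^{(1)}(C)$ lifts directly to $N_{C/X}\in \mathcal{E}_1^{(n-1)}(C)$. For the infinite-type condition, the identification
\[
N_{C/X}\otimes S^{n+1}N_{C/X}^* \cong \bigl(L|_C^{-n}\bigr)^{\oplus M_n}
\]
for some $M_n>0$ (explicitly $M_n=(n-1)\binom{2n-1}{n+1}$) reduces the vanishing of $u_n(C, X)$ to that of $H^1(C, L|_C^{-n})$; the Diophantine condition forces $L|_C^n$ to be a non-trivial unitary flat line bundle for every $n\geq 1$, so Serre duality on $C$ (using $K_C\cong\mathcal{O}_C$) supplies the required vanishing. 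Finally, since $N_{C/D_\lambda}\cong (L|_C)^{\oplus(n-2)}$ is unitary flat, Theorem \ref{thm:main}(ii) applied to each $D_\lambda$ gives a common neighborhood $V$ of $C$ on which $L|_V=[D_\lambda]|_V$ is unitary flat, furnishing the metric $h_V$.

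Next I would carry out the gluing. On $V\setminus C$ both $h_V$ and $h_{\mathrm{FS}}$ are smooth Hermitian metrics on the same line bundle, so their log-ratio $\psi := -\log(h_{\mathrm{FS}}/h_V)$ is a well-defined smooth function; it is plurisubharmonic because $c(L, h_{\mathrm{FS}})\geq 0 = c(L, h_V)$, and it tends to $-\infty$ along $C$ since $h_{\mathrm{FS}}$ has a logarithmic singularity there. Pick $V_0\Subset V$ with $C\subset V_0$ and a constant $M$ large enough that $\psi>-M$ on $V\setminus V_0$, and set
\[
h := h_V\cdot e^{-\max\nolimits_\varepsilon(\psi,-M)} \ \text{on}\ V, \qquad h := h_{\mathrm{FS}} \ \text{on}\ X\setminus V_0,
\]
where $\max_\varepsilon$ is the standard regularized maximum. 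On the overlap $V\setminus V_0$, $\max_\varepsilon(\psi,-M)=\psi$ and $h_V\cdot e^{-\psi}=h_{\mathrm{FS}}$, so the two expressions agree and $h$ is a globally defined $C^\infty$ Hermitian metric on $L$. Its curvature is $dd^c\max_\varepsilon(\psi,-M)\geq 0$ on $V$ and $c(L,h_{\mathrm{FS}})\geq 0$ on $X\setminus V_0$, yielding semi-positivity throughout.

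The hard part will not be the patching, which is a routine regularized-maximum construction once $h_V$ is in hand, but the verification of the hypotheses of Theorem \ref{thm:main}: carefully unpacking the definition of $\mathcal{E}_1^{(r)}$ to confirm that $(L|_C)^{\oplus(n-1)}$ inherits the Diophantine condition from $L|_C$ (using that $\sum a_\lambda\geq 1$ is the relevant exponent for each tuple $a$), and ensuring that the neighborhoods of $C$ produced by Theorem \ref{thm:main}(ii) for the $n-1$ hypersurfaces $D_\lambda$ can be simultaneously shrunk to a common $V$, which is immediate since a finite intersection of open neighborhoods of $C$ is again such.
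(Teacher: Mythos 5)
Your proposal is correct and follows essentially the same route as the paper: verify that $N_{C/X}\cong(L|_C)^{\oplus(n-1)}$ inherits the Diophantine condition, that the obstruction groups $H^1(C,L|_C^{-n})$ vanish (the paper states this directly; your Serre-duality justification is the intended one), apply Theorem \ref{thm:main}$(ii)$ with $S=D_1$ to get a unitary flat metric on $L$ near $C$, and glue it to the Bergman/Fubini--Study-type metric $\bigl(\sum_\lambda h_\lambda^{-1}\bigr)^{-1}$ by a regularized max/min. The only cosmetic difference is that you invoke Theorem \ref{thm:main}$(ii)$ for every $D_\lambda$, whereas a single hypersurface $D_1$ already suffices.
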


Note that $L$ as in Corollary \ref{cor:main} has $C$ as a stable base locus: $C=SB(L):=\textstyle\bigcap_{m\geq 1}{\rm Bs}\,|L^m|$. 
Corollary \ref{cor:main} can be applied to the example of the blow-up of a del Pezzo manifold at a general point as follows: 

\begin{corollary}\label{cor:dPmfd_metric}
Let $(V, L)$ be a del Pezzo manifold of degree $1$ 
(i.e. $V$ is a projective manifold of dimension $n$ and $L$ is an ample line bundle on $V$ with $K_V^{-1}\cong L^{n-1}$ and the self-intersection number $(L^n)$ is equal to $1$), 
and $C\subset V$ be an intersection of general $n-1$ elements of $|L|$. 
For each point $q\in C$ with $L|_C\otimes [-q]\in \mathcal{E}_1^{(0)}(C)\cup\mathcal{E}_1^{(1)}(C)$, the anti-canonical bundle of the blow-up of $V$ at $q$ is semi-positive. 
\end{corollary}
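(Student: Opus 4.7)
The plan is to apply Corollary \ref{cor:main} to the blow-up of $V$ at $q$. Let $\pi\colon\widetilde{V}\to V$ denote this blow-up, with exceptional divisor $E$, and set $\widetilde{L}:=\pi^*L\otimes\mathcal{O}_{\widetilde{V}}(-E)$. The standard blow-up formula $K_{\widetilde{V}}=\pi^*K_V\otimes\mathcal{O}_{\widetilde{V}}((n-1)E)$ combined with the hypothesis $K_V^{-1}\cong L^{n-1}$ gives $K_{\widetilde{V}}^{-1}\cong\widetilde{L}^{n-1}$, so it suffices to produce a $C^\infty$ Hermitian metric on $\widetilde{L}$ with semi-positive curvature; the $(n-1)$-th tensor power will then be such a metric on $K_{\widetilde{V}}^{-1}$.

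To invoke Corollary \ref{cor:main} on $(\widetilde{V},\widetilde{L})$, I would use the strict transforms $\widetilde{D}_\lambda$ of the given $D_\lambda\in|L|$ whose common intersection is $C$. Since $q\in C\subset D_\lambda$ and each $D_\lambda$ is smooth at $q$ by the transversality hypothesis, each $\widetilde{D}_\lambda$ lies in $|\pi^*L\otimes\mathcal{O}_{\widetilde{V}}(-E)|=|\widetilde{L}|$. The first thing to verify is that $\bigcap_\lambda\widetilde{D}_\lambda$ coincides, as a reduced scheme, with the strict transform $\widetilde{C}$ of $C$ and that the $\widetilde{D}_\lambda$ meet transversally along $\widetilde{C}$. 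Off the exceptional divisor this is the given transversality along $C\setminus\{q\}$; along $E\cong\mathbb{P}(T_qV)$, each $\widetilde{D}_\lambda$ meets $E$ in the hyperplane $\mathbb{P}(T_qD_\lambda)$, and the transversality of the $D_\lambda$ at $q$ (that is, $\bigcap_\lambda T_qD_\lambda=T_qC$) translates precisely into the transversality of these $n-1$ hyperplanes, meeting in the single point $\mathbb{P}(T_qC)=\widetilde{C}\cap E$. Since $\widetilde{C}$ is isomorphic to $C$ via $\pi$, it is a smooth elliptic curve.

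It remains to identify $\widetilde{L}|_{\widetilde{C}}$: because $\widetilde{C}\cap E$ is a single reduced point lying over $q$, we have $\mathcal{O}_{\widetilde{V}}(-E)|_{\widetilde{C}}\cong[-q]$ under the identification $\widetilde{C}\cong C$, whence $\widetilde{L}|_{\widetilde{C}}\cong L|_C\otimes[-q]$, which belongs to $\mathcal{E}_1^{(0)}(C)\cup\mathcal{E}_1^{(1)}(C)$ by hypothesis. Corollary \ref{cor:main} then yields the semi-positivity of $\widetilde{L}$, and hence of $K_{\widetilde{V}}^{-1}\cong\widetilde{L}^{n-1}$. The main obstacle I anticipate is the transversality check at the unique point of $\widetilde{C}\cap E$, where the blow-up alters the local geometry; once that linear-algebraic point is in place, the rest is a direct appeal to the already-established Corollary \ref{cor:main} together with the bookkeeping on line-bundle restrictions.
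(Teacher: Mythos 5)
Your argument is correct and follows essentially the same route as the paper: blow up at $q$, pass to the strict transforms $\widetilde{D}_\lambda\in|\widetilde{L}|$ meeting transversally along $\widetilde{C}\cong C$ with $\widetilde{L}|_{\widetilde{C}}\cong L|_C\otimes[-q]$, apply Corollary \ref{cor:main}, and conclude via $K_{\widetilde{V}}^{-1}\cong\widetilde{L}^{n-1}$. You even supply the transversality check along the exceptional divisor that the paper leaves implicit; the only ingredient you take for granted that the paper actually verifies (via Fujita's results on del Pezzo manifolds of degree $1$) is that the general intersection $C$ is indeed a smooth elliptic curve.
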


We remark that 
Corollary \ref{cor:dPmfd_metric} can be regarded as a generalization of the known phenomena for the blow-up of $\mathbb{P}^2$ at general nine points (\cite{A}, \cite{B}, \cite{U}, see also \cite[\S 1]{D}), or 
the blow-up of $\mathbb{P}^3$ at general eight points 
(\cite[Corollary 1]{K}. Note that this result can be re-proved by using \cite[Theorem 1.4, Remark 3.12]{KO}, which is a corrected form of \cite[Theorem 1]{K}). 

The organization of the paper is as follows. 
In \S 2, we summarize some fundamental facts and notations on the unitary flat vector bundles on a compact complex manifold and local defining functions of compact submanifolds. 
In \S 3, we give the definitions of the obstruction class $u_n(Y, X)$ and the type of the pair $(Y, X)$. 
In \S 4, we prove Theorem \ref{thm:main}. 
In \S 5, we show Corollary \ref{cor:main}. 
In \S 6, we give some examples. Here we will prove Corollary \ref{cor:dPmfd_metric}. 
In \S 7, we list some remaining problems. 

\vskip3mm
{\bf Acknowledgment. } 
The author would like to give heartful thanks to Prof. Tetsuo Ueda whose comments and suggestions were of inestimable value for my study. 
He thanks Prof. Kento Fujita who taught him an example of the blow-up of a del Pezzo manifold at a general point.  
He also thanks Prof. Noboru Ogawa for helpful comments and warm encouragements. 
He is supported by the Grant-in-Aid for Scientific Research (KAKENHI No.28-4196) and the Grant-in-Aid for JSPS fellows. 


\section{Preliminaries}

\subsection{Unitary flat vector bundles on compact complex manifolds}\label{section:unitary_flat}

Let $Y$ be a compact complex manifold 
and $E$ be a holomorphic vector bundle on $Y$. 
We say that $E$ is {\it unitary flat} if $E\in {\rm Image}\,(H^1(Y, U(r))\to H^1(Y, GL_r(\mathcal{O}_Y)))$. 
It means that, for a suitable choice of an open covering $\{U_j\}$ of $Y$ and a local frame $e_j=(e_j^1, e_j^2, \dots, e_j^r)$ of $E$ on each $U_j$, the transition matrix $T_{jk}$ of $\{(U_j, e_j)\}$ on each $U_{jk}:=U_j\cap U_k$ can be a locally constant function values in $U(r)$: 
i.e. for some $T_{jk}\in U(r)$, it holds that $e_j=T_{jk}e_k$, or equivalently, $e_j^\lambda=\textstyle\sum_{\mu=1}^r(T_{jk})^\lambda_\mu\cdot e_k^\mu$. 
Here we denote by $(T_{jk})^\lambda_\mu$ the $(\lambda, \mu)$-th  entry of $T_{jk}$. 
For a unitary flat vector bundle $E$, we can define a {\it unitary flat metric} $h$ on $E$ by regarding each $e_j$ as an orthonormal frame. 
By using this $h$, we obtain: 

\begin{lemma}\label{lem:unitary_section_const}
Let $a_{j, \lambda}\colon U_j\to \mathbb{C}$ be a holomorphic function. 
Assume that $\{(U_j, \textstyle\sum_{\lambda=1}^ra_{j, \lambda}\cdot e_j^\lambda)\}$ glue up to define a holomorphic global section $a$ of $E$. 
Then $a_{j, \lambda}$ is a locally constant function on each $U_j$. 
\end{lemma}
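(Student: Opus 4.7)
The plan is to exploit the unitary flat metric $h$ on $E$ (defined in the paragraph preceding the lemma by declaring each frame $e_j$ to be orthonormal) together with the maximum principle on the compact manifold $Y$. The point is that, because the transition matrices $T_{jk}$ are unitary and locally constant, the pointwise norm $|a|_h^2$ of the global section $a$ is well-defined on all of $Y$, and on each $U_j$ it has the very concrete form
\[
|a|_h^2\big|_{U_j}\;=\;\sum_{\lambda=1}^{r}|a_{j,\lambda}|^{2}.
\]

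First I would verify this identity from the definition of $h$, then observe that the right-hand side is plurisubharmonic on $U_j$, since for any holomorphic function $f$ one has
\[
\partial\bar\partial |f|^{2}\;=\;\partial f\wedge\overline{\partial f}\;\ge\;0,
\]
and a sum of plurisubharmonic functions is plurisubharmonic. Hence $|a|_h^2$ is a globally defined plurisubharmonic function on the compact complex manifold $Y$, so by the maximum principle it is a constant, say $c\ge 0$.

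Now the key step: on each $U_j$ we have $\sum_\lambda |a_{j,\lambda}|^2\equiv c$, so applying $\partial\bar\partial$ gives
\[
0\;=\;\partial\bar\partial\!\left(\sum_{\lambda=1}^{r}|a_{j,\lambda}|^{2}\right)\;=\;\sum_{\lambda=1}^{r}\partial a_{j,\lambda}\wedge\overline{\partial a_{j,\lambda}}.
\]
Each summand is a non-negative $(1,1)$-form, hence every term vanishes, which forces $\partial a_{j,\lambda}=0$ for each $\lambda$. Since $a_{j,\lambda}$ is holomorphic, this means $da_{j,\lambda}=0$, so $a_{j,\lambda}$ is locally constant on $U_j$, as required.

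There is no real obstacle here; the only place one has to be careful is in verifying that the norm $|a|_h^2$ is globally well-defined, which rests on $h$ being preserved under $U(r)$-valued transitions, and in going from constancy of the sum $\sum|a_{j,\lambda}|^2$ to individual constancy of the $a_{j,\lambda}$, which is immediate from the semi-positivity of each $\partial a_{j,\lambda}\wedge\overline{\partial a_{j,\lambda}}$. Compactness of $Y$ is essential at the maximum-principle step.
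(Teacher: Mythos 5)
Your proof is correct and takes essentially the same route as the paper's: both apply the maximum principle to the plurisubharmonic function $|a|_h^2=\sum_{\lambda}|a_{j,\lambda}|^2$ on the compact manifold $Y$ and then extract the vanishing of each $\partial a_{j,\lambda}$ from the resulting identity $\sum_\lambda \partial a_{j,\lambda}\wedge\overline{\partial a_{j,\lambda}}=0$. The paper phrases this last step by observing that each $|a_{j,\lambda}|^2$ is pluriharmonic, which is the same computation.
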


\begin{proof}(see also the proof of {\cite[\S 1 Proposition 1]{Se}})
By applying the maximal principle to the psh (plurisubharmonic) function $|a|_h^2$, 
we obtain $|a|_h^2\equiv C$ for some constant $C$. 
As it holds that $|a_{j, \lambda}|^2=C-\textstyle\sum_{\lambda\not=\mu}|a_{j, \mu}|^2$ on each $U_j$, we conclude that $|a_{j, \lambda}|^2$ is pluriharmonic for each $\lambda=1, 2, \dots, r$, which proves the lemma. 
\end{proof}

By considering the monodromy of the Chern connection of $h$, we obtain a unitary representation $\rho=\rho_E\colon \pi_1(Y, *)\to U(r)$. 
Conversely, for a given unitary representation $\rho\colon\pi_1(Y, *)\to U(r)$, we can construct a unitary flat vector bundle $E_\rho$ by 
\[
E_\rho:=\widetilde{Y}\times \mathbb{C}^r/\sim_\rho, 
\]
where $\widetilde{Y}\to Y$ is the universal covering of $Y$ and $\sim_\rho$ is the relation defined by $(z, v)\sim_\rho (\gamma z, \rho(\gamma) v)$ for each $(z, v)\in \widetilde{Y}\times \mathbb{C}^r$ and $\gamma\in\pi_1(Y, *)$. 

\begin{proposition}\label{prop:flat_rep_corresp}
The above gives $1:1$-correspondence between the image of the natural map
$H^1(Y, U(r))\to H^1(Y, GL_r(\mathcal{O}_Y))$ and the set 
$\{\rho: U(r){\rm -representation}\ {\rm of}\ \linebreak\pi_1(Y, *)\}/\sim$, 
where $\rho\sim\rho'$ means that there exists $A\in GL_r(\mathbb{C})$ such that $A^{-1}\cdot \rho\cdot A=\rho'$ holds. 
\end{proposition}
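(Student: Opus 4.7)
The plan is to construct both directions of the correspondence and then verify that they descend to mutually inverse maps on equivalence classes. For the direction from bundles to representations, I start with a cocycle $\{T_{jk}\}\in Z^1(\{U_j\}, U(r))$ representing a class in $H^1(Y, U(r))$, regard its image in $H^1(Y, GL_r(\mathcal{O}_Y))$ as a holomorphic vector bundle $E$, and equip $E$ with the unitary flat metric $h$ obtained by declaring each local frame $e_j$ orthonormal. The Chern connection of $(E,h)$ is flat (its connection forms vanish in the frames $e_j$, and the $T_{jk}$ are constant), so parallel transport yields a monodromy $\rho_E\colon\pi_1(Y,*)\to U(r)$. A change of the basepoint frame replaces $\rho_E$ by a $U(r)$-conjugate; passing to a refinement of $\{U_j\}$ or to a cohomologous cocycle has the same effect. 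Thus the class of $\rho_E$ modulo $GL_r(\mathbb{C})$-conjugation is a well-defined invariant of the element of $H^1(Y, GL_r(\mathcal{O}_Y))$ represented by $E$.

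For the reverse direction, given a unitary representation $\rho$, I check that $E_\rho=\widetilde{Y}\times\mathbb{C}^r/\!\sim_\rho$ is a holomorphic bundle: the $\pi_1$-action is holomorphic and properly discontinuous, so the quotient is smooth, and choosing local sections of $\widetilde{Y}\to Y$ over any evenly-covered open set yields local frames whose transition functions are products of $\rho(\gamma)$'s, hence locally constant elements of $U(r)$. This exhibits $E_\rho$ as an element of the image of $H^1(Y,U(r))\to H^1(Y,GL_r(\mathcal{O}_Y))$. If $\rho'=A^{-1}\rho A$ for some $A\in GL_r(\mathbb{C})$, then the fiberwise map $(z,v)\mapsto(z,Av)$ descends to a holomorphic isomorphism $E_{\rho'}\xrightarrow{\sim}E_\rho$, so the construction $\rho\mapsto [E_\rho]$ descends to the quotient by $GL_r(\mathbb{C})$-conjugation. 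That the two constructions compose to the identity in the direction $\rho\mapsto E_\rho\mapsto \rho_{E_\rho}$ follows by unwinding definitions: the flat frame on $E_\rho$ pulled back to $\widetilde{Y}$ is the standard frame of $\mathbb{C}^r$, and by construction the deck transformation by $\gamma$ acts as $\rho(\gamma)$.

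The main obstacle is the injectivity in the other direction: if $E_{\rho_1}\cong E_{\rho_2}$ as holomorphic bundles, then $\rho_1\sim\rho_2$ under $GL_r(\mathbb{C})$. The key observation is that a holomorphic isomorphism $\Phi\colon E_{\rho_1}\to E_{\rho_2}$ corresponds to a nowhere-vanishing holomorphic global section of the bundle $E_{\rho_1}^*\otimes E_{\rho_2}$, which is itself unitary flat (its monodromy is $\rho_1^{-\top}\otimes \rho_2$, and $U(r)$-valued cocycles tensor to $U(r^2)$-valued ones after a unitary change of basis). Lemma \ref{lem:unitary_section_const} then forces $\Phi$ to be locally constant in the flat frames; lifting to the universal cover, $\Phi$ becomes a constant matrix $A\in\mathrm{Mat}_r(\mathbb{C})$, and invertibility of $\Phi$ makes $A\in GL_r(\mathbb{C})$. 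Finally, the equivariance of $\Phi$ under the deck transformation action translates to $A\,\rho_1(\gamma)=\rho_2(\gamma)\,A$ for every $\gamma\in\pi_1(Y,*)$, which is exactly $\rho_2=A\rho_1 A^{-1}$. Once this is in hand, the two maps on equivalence classes are mutually inverse and the proposition follows.
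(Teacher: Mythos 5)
Your proposal is correct and follows essentially the same route as the paper: the crux in both is that a holomorphic isomorphism of unitary flat bundles is a global holomorphic section of the unitary flat bundle $\mathrm{Hom}(E_{\rho_1},E_{\rho_2})\cong E_{\rho_1}^*\otimes E_{\rho_2}$, which Lemma \ref{lem:unitary_section_const} forces to be (locally) constant, yielding the conjugating matrix $A$ (the paper packages this as Lemma \ref{lem:gl_flat_str_unique} and then reads off $\rho([\gamma])=A_j\rho'([\gamma])A_j^{-1}$, while you work directly on the universal cover). The remaining steps — constructing $E_\rho$, checking $\rho\sim\rho'$ gives $E_\rho\cong E_{\rho'}$ via $(z,v)\mapsto(z,A^{-1}v)$, and unwinding $\rho_{E_\rho}=\rho$ — match the paper's argument.
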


For proving Proposition \ref{prop:flat_rep_corresp}, we need the following: 

\begin{lemma}\label{lem:gl_flat_str_unique}
Let $E$ and $F$ be unitary flat vector bundles on $Y$. 
Assume that $E$ and $F$ are isomorphic to each other as holomorphic vector bundles. 
Then the image of $E$ and $F$ by the natural map $H^1(Y, U(r))\to H^1(Y, GL_r(\mathbb{C}))$ coincide with each other. 
\end{lemma}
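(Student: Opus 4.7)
The plan is to realize a holomorphic isomorphism $\Phi\colon E\to F$ as a global holomorphic section of the unitary flat bundle $\mathrm{Hom}(E,F)=E^*\otimes F$, and then to apply Lemma \ref{lem:unitary_section_const} to force the components of $\Phi$ in a flat frame to be locally constant. This will produce a locally constant matrix cochain realizing the cohomology between the two $U(r)$-cocycles inside $GL_r(\mathbb{C})$.

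First I would fix unitary flat local frames $e_j=(e_j^1,\dots,e_j^r)$ of $E$ and $f_j=(f_j^1,\dots,f_j^r)$ of $F$ on a common sufficiently fine open covering $\{U_j\}$ of $Y$, arranged so that every $U_j$ is connected. Let $T^E_{jk},T^F_{jk}\in U(r)$ be the associated transition matrices. The bundle $E^*\otimes F$ is then unitary flat: the dual frames $(e_j^\lambda)^*$ have transition matrices $\bigl((T^E_{jk})^{-1}\bigr)^{\mathrm T}\in U(r)$, and tensor products of unitary matrices are unitary, so $\{(e_j^\lambda)^*\otimes f_j^\mu\}_{\lambda,\mu}$ is a unitary flat frame. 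The isomorphism $\Phi$ is a global holomorphic section, which on each $U_j$ can be written as
\[
\Phi=\sum_{\lambda,\mu}\Phi_j{}^{\mu}_{\lambda}\,(e_j^\lambda)^*\otimes f_j^\mu
\]
for holomorphic functions $\Phi_j{}^{\mu}_{\lambda}\colon U_j\to\mathbb{C}$.

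Next I would apply Lemma \ref{lem:unitary_section_const} (its proof uses only the existence of a unitary flat metric, so it applies verbatim to any unitary flat vector bundle) to conclude that every $\Phi_j{}^{\mu}_{\lambda}$ is locally constant on $U_j$, hence constant by connectedness. Writing $\Phi_j:=(\Phi_j{}^{\mu}_{\lambda})\in GL_r(\mathbb{C})$ (invertible because $\Phi$ is an isomorphism), the compatibility of the two local expressions for $\Phi(e_j^\lambda)$ on $U_{jk}$ translates to a matrix identity of the form $\Phi_j\cdot T^E_{jk}=T^F_{jk}\cdot \Phi_k$. This says precisely that the $U(r)$-valued cocycles $\{T^E_{jk}\}$ and $\{T^F_{jk}\}$ are cohomologous via the constant $GL_r(\mathbb{C})$-cochain $\{\Phi_j\}$, so their images in $H^1(Y,GL_r(\mathbb{C}))$ agree.

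The only mild point requiring care is checking that Lemma \ref{lem:unitary_section_const} is genuinely applicable to $E^*\otimes F$, which as noted reduces to observing that this bundle inherits a unitary flat structure from those of $E$ and $F$; beyond this, the argument is a direct bookkeeping exercise and I do not expect a serious obstacle.
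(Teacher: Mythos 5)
Your proposal is correct and follows exactly the route the paper takes: the paper's proof of this lemma is precisely "apply Lemma \ref{lem:unitary_section_const} to a global section of the unitary flat bundle ${\rm Hom}(E,F)\cong E^*\otimes F$" (citing Seshadri for details), and your write-up supplies those details faithfully, including the verification that $E^*\otimes F$ is unitary flat and the resulting constant cochain $\{\Phi_j\}$ realizing the cohomology in $GL_r(\mathbb{C})$.
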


\begin{proof}
The lemma is shown by applying Lemma \ref{lem:unitary_section_const} to a global section of the unitary flat vector bundle ${\rm Hom}(E, F)\cong E^*\otimes F$. 
See the proof of {\cite[\S 1 Proposition 1]{Se}} for the details. 
\end{proof}

\begin{proof}[Proof of Proposition \ref{prop:flat_rep_corresp}]
Let $E$ be a unitary flat vector bundle. 
Take $\{(U_j, e_j)\}$ and $\rho=\rho_E$ as above. 
Let $\{(U_j, f_j=(f_j^1, f_j^2, \dots, f_j^r))\}$ be another local frame of $E$ with $f_j=S_{jk} f_k$ on each $U_{jk}$ ($S_{jk}\in U(r)$), 
and $\rho'\colon \pi_1(Y, *)\to U(r)$ be the monodromy defined by using $f_j$ as an orthonormal frame on each $U_j$. 
By Lemma \ref{lem:gl_flat_str_unique}, we can take $A_j\in GL_r(\mathbb{C})$ for each $j$ with $A_jS_{jk}=T_{jk}A_k$. 
Then, for each loop $\gamma$ of $Y$ with a base point $*\in U_j$, we can calculate that $\rho([\gamma])=A_j\cdot \rho'([\gamma])\cdot A_j^{-1}$, which proves $\rho\sim\rho'$. 

Conversely, let $\rho$ and $\rho'$ be two $U(r)$-representations of $\pi_1(Y, *)$ with $\rho\sim \rho'$. 
Take $A\in GL_r(\mathbb{C})$ such that $A^{-1}\cdot \rho\cdot A=\rho'$. 
Define the map $F\colon \widetilde{Y}\times \mathbb{C}^r\to \widetilde{Y}\times \mathbb{C}^r$ by $F(z, v):=(z, A^{-1}\cdot v)$. 
Then it is easily observed that $F$ induces an isomorphism $E_\rho\cong E_{\rho'}$, which proves the proposition. 
\end{proof}

\begin{remark}\label{rmk:injectivity_of_the_natural_map}
The definition of the relation $\sim$ in Proposition \ref{prop:flat_rep_corresp} can be replaced by the following one: 
we say $\rho\sim\rho'$ if there exists $U\in U(r)$ such that $U^{-1}\cdot \rho\cdot U=\rho'$ holds. 
It is because, for each $A\in GL_r(\mathbb{C})$ and $S\in U(r)$ with $A^{-1}\cdot S\cdot A\in U(r)$, it holds that $A^{-1}\cdot S\cdot A=U_A^{-1}\cdot S\cdot U_A$, where 
$U_A$ is the unitary part of the polar decomposition $A=U_A\cdot P_A$. 
Therefore, by the same argument as in the proof of Proposition \ref{prop:flat_rep_corresp}, we obtain that the image of $H^1(Y, U(r))\to H^1(Y, GL_r(\mathcal{O}_Y))$ is naturally isomorphic to $H^1(Y, U(r))$, 
or equivalently, 
the natural map $H^1(Y, U(r))\to H^1(Y, GL_r(\mathcal{O}_Y))$ is injective. 
\end{remark}

\begin{remark}
Here we give another (more direct) proof of the injectivity of the natural map $i\colon H^1(Y, U(r))\to H^1(Y, GL_r(\mathcal{O}_Y))$, which was taught by Professor Tetsuo Ueda. 
Let $E:=\{(U_{jk}, T_{jk})\}$ and $F:=\{(U_{jk}, S_{jk})\}$ be elements of $H^1(Y, U(r))$ with $i(E)=i(F)$. 
By Lemma \ref{lem:gl_flat_str_unique}, we can take $A_j\in GL_r(\mathbb{C})$ for each $j$ such that $S_{jk}A_k=A_jT_{jk}$ holds. 
Denote by $A_j=P_jU_j$ the polar decomposition of $A_j$, where $P_j$ is the positive definite Hermitian part and $U_j$ is the unitary part. 
Then we have $(S_{jk}P_kS_{jk}^{-1})\cdot (S_{jk}U_k)=P_j\cdot (U_jT_{jk})$. 
By the uniqueness of the polar decomposition, we obtain $S_{jk}U_k=U_jT_{jk}$. 
\end{remark}

\begin{remark}\label{rmk:unitary_flat_subbundle}
Let $F$ be a holomorpchic subbundle of a unitary flat vector bundle $E$ on $Y$. 
Then it follows from the same argument as in the proof of Lemma \ref{lem:gl_flat_str_unique} that $F$ is a unitary flat subbundle of $E$: 
i.e. $F$ is the unitary flat vector bundle which corresponds to a unitray subrepresentation of $\rho_E$. 
\end{remark}

\subsection{Local defining functions}\label{section:notation}

Let $X$ be a complex manifold $X$ and $Y$ be a compact complex submanifold of codimension $r$ with unitary flat normal bundle. 
Take a sufficiently fine open covering $\{U_j\}$ of $Y$. 
In this paper, we always assume that $\#\{U_j\}<\infty$ and that $U_j$ and $U_{jk}$ are simply connected and Stein for each $j$ and $k$. 
Denote by $z_j$ a coordinate of $U_j$. 
Take a sufficiently small  tubular neighborhood $V$ of $Y$ in $X$ 
and an open covering $\{V_j\}$ of $V$ with $V_j\cap Y=U_j$ for each $j$. 
By shrinking $V$ and $V_j$'s if necessary, we may assume that $U_{jk}\not=\emptyset$ iff $V_{jk}\not=\emptyset$. 

Take a defining functions system $w_j=(w_j^1, w_j^2, \dots, w_j^r)$ of $U_j$ in $V_j$. 
We regard $(z_j, w_j)$ as a coordinates system of $V_j$. 
Note that, here we denote by the same letter $z_j$ an extension of $z_j$ to $V_j$. 
In what follows, we always use the same $z_j$'s even though we often change $w_j$'s and shrink $V$ and $V_j$'s. 
More precisely, we fix a local projection $p_j\colon V_j\to U_j$ and, for any function $f$ defined on $U_j$, 
we always use the pull back $p_j^*$ for extending $f$ to $V_j$ and denote $p_j^*f$ by the same letter $f(z_j)$. 

As $N_{Y/X}$ is unitary flat, we can take a local frame $e_j=(e_j^1, e_j^2, \dotsm e_j^r)$ of the conormal bundle $N_{Y/X}^*$ on $U_j$ with $e_j=T_{jk}e_k$ for each $j$, $k$ ($T_{jk}\in U(r)$). 
By changing $w_j$ if necessary, we may assume that $dw_j=e_j$ holds on each $U_{j}$ 
(Consider a new defining functions system $M_j(z_j)\cdot w_j$ if $e_j=M_j(z_j)\cdot dw_j|_{U_j}$). 
In what follows, we always assume this condition for the system $\{w_j\}$. 
Then it follows that the expansion of the function $(\textstyle\sum_{\mu=1}^r(T_{jk})^\lambda_\mu\cdot w_k^\mu)|_{V_{jk}}$ in the variables $w_j$ is in the form of $\textstyle\sum_{\mu=1}^r(T_{jk})^\lambda_\mu\cdot w_k^\mu = w_j^\lambda+O(|w_j|^2)$, where we denote by $O(|w_j|^2)$ the higher order terms. 
Let us denote this expansion by 
\[
\sum_{\mu=1}^r(T_{jk})^\lambda_\mu\cdot w_k^\mu = w_j^\lambda+\sum_{|\alpha|\geq 2}f_{kj, \alpha}^\lambda(z_j)\cdot w_j^\alpha, 
\]
where $\alpha=(\alpha_1, \alpha_2, \dots, \alpha_r)\in(\mathbb{Z}_{\geq 0})^r$, 
$|\alpha|:=\alpha_1+\alpha_2+\cdots +\alpha_r$, 
and $w_j^\alpha:=\textstyle\prod_{\lambda=1}^r(w_j^\lambda)^{\alpha_\lambda}$. 
We also denote this expansion by 
\begin{equation}\label{eq:exp_of_Tw}
T_{jk} w_k = w_j+\sum_{|\alpha|\geq 2}f_{kj, \alpha}(z_j)\cdot w_j^\alpha, 
\end{equation}
where 
\[
w_j=
\left(
    \begin{array}{c}
      w_j^1 \\
      w_j^2 \\
	\vdots \\
	w_j^r
    \end{array}
  \right),\ 
f_{kj, \alpha}=
\left(
    \begin{array}{c}
      f_{kj, \alpha}^1 \\
      f_{kj, \alpha}^2 \\
	\vdots \\
	f_{kj, \alpha}^r
    \end{array}
  \right). 
\]

We denote by $e_j^*=(e_{j, 1}^*, e_{j, 2}^*, \dots, e_{j, r}^*)$ the dual of $e_j$ and regard it as a local frame of $N_{Y/X}$. 
For each $\alpha$ with $|\alpha|=n$, we denote by 
$e_j^\alpha$ the local section $\textstyle\prod_{\lambda=1}^r(e_j^\lambda)^{\alpha_\lambda}$ of the symmetric tensor bundle $S^nN_{Y/X}^*$. 
Then $\{e_j^\alpha\}_{|\alpha|=n}$ forms a local frame of $S^nN_{Y/X}^*$ on $U_j$. 
On each $U_{jk}$, it holds that 
$e_j^\alpha
=\textstyle\prod_{\lambda=1}^r(\textstyle\sum_{\mu=1}^r(T_{jk})^\lambda_\mu\cdot e_k^\mu)^{\alpha_\lambda}$. 
Let us denote by $\tau^\alpha_{jk, \beta}$ the coefficient of $e_k^\beta$ in the expansion of the right hand side: i.e. 
\[
e_j^\alpha
=\sum_{|\beta|=n}\tau^\alpha_{jk, \beta}\cdot e_k^\beta. 
\]

\begin{remark}\label{rmk:symtensor_flat}
Note that the matrix $(\tau^\alpha_{jk, \beta})$ need not be unitary when $r>1$, however the vector bundle $S^nN_{Y/X}^*$ itself is unitary flat. 
Here we explain the unitary flat structure of $S^nN_{Y/X}^*$ induced from the orthonormal frames $\{(U_j, e_j)\}$ of $N_{Y/X}^*$. 
Let us consider the local sections 
\[
\left.\left\{e_j^{\lambda_1}\otimes e_j^{\lambda_2}\otimes \cdots\otimes e_j^{\lambda_n}\right| \lambda_p\in\{1, 2, \dots, r\}\right\}
\]
of $\textstyle\bigotimes^nN_{Y/X}^*:=N_{Y/X}^*\otimes N_{Y/X}^*\otimes\cdots\otimes N_{Y/X}^*$ and regard it as a local frame on each $U_j$. 
Then, as the transition matrix on $U_{jk}$ is equal to $S_{jk}\otimes S_{jk}\otimes\cdots\otimes S_{jk}\in U(r^n)$, this local frame can be regarded as a orthonormal frame of the unitary flat metric induced from that of $N_{Y/X}^*$. 
By regarding each symmetric section of $\textstyle\bigotimes^nN_{Y/X}^*$ as a section of $S^nN_{Y/X}^*$ in the usual manner, we can regard $S^nN_{Y/X}^*$ as a unitary flat subbundle of $\textstyle\bigotimes^nN_{Y/X}^*$ with an orthonormal frame $\{\sqrt{n!/\alpha!}\cdot e_j^\alpha\}_\alpha$ on each $U_j$, which induces the unitary flat structure of $S^nN_{Y/X}$ ($\alpha!:=\textstyle\prod_{\lambda=1}^r\alpha_\lambda!$). 
\end{remark}


\section{The obstruction classes and the type of the pair $(Y, X)$}

\subsection{Definition of the obstruction classes}\label{section:def_of_obstr_class}

Take $\{(U_j, z_j)\}, \{(V_j, (z_j, w_j))\}, \{e_j\}$, and $\{(U_{jk}, T_{jk})\}$ as in \S \ref{section:notation}. 
In this section, we will define the obstruction class $u_n(Y, X)$ as a straightforward generalization of the Ueda class. 

\begin{definition}
We say that the system $\{(V_j, w_j)\}$ is of {\it type $n$} ($n\geq 1$) if the coefficient function $f_{kj, \alpha}$ in the expansion (\ref{eq:exp_of_Tw}) is equal to $0$ for any $\alpha$ with $|\alpha|\leq n$ on each $U_{jk}$. 
\end{definition}

Let $\{(V_j, w_j)\}$ be a system of type $n$. 
Then, by definition, the expansion (\ref{eq:exp_of_Tw}) can be written as follows: 
$T_{jk} w_k = w_j+\textstyle\sum_{|\alpha|\geq n+1}f_{kj, \alpha}(z_j)\cdot w_j^\alpha$. 
For
\[
f_{kj, n+1}=
\left(
    \begin{array}{c}
      f_{kj, n+1}^1 \\
      f_{kj, n+1}^2 \\
	\vdots \\
	f_{kj, n+1}^r
    \end{array}
  \right)
:=\sum_{|\alpha|=n+1}
\left(
    \begin{array}{c}
      f_{kj, \alpha}^1 \\
      f_{kj, \alpha}^2 \\
	\vdots \\
	f_{kj, \alpha}^r
    \end{array}
  \right)\cdot e_j^\alpha, 
\]
we can show the following: 

\begin{lemma}\label{lem:1-cocycle}
The system $\{(U_{jk}, \textstyle\sum_{\lambda=1}^re_{j, \lambda}^*\otimes f_{kj, n+1}^\lambda)\}$ satisfies the $1$-cocycle condition: i.e. $\{(U_{jk}, \textstyle\sum_{\lambda=1}^re_{j, \lambda}^*\otimes f_{kj, n+1}^\lambda)\}\in\check{Z}^1(Y, N_{Y/X}\otimes S^{n+1}N_{Y/X}^*)$. 
\end{lemma}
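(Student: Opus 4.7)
The plan is to derive the 1-cocycle relation directly from the matrix cocycle identity $T_{jl}=T_{jk}T_{kl}$ that the unitary transition matrices satisfy on each triple intersection $U_{jkl}$. Starting from the three instances of the expansion (\ref{eq:exp_of_Tw}) for the pairs $(j,k)$, $(k,l)$, $(j,l)$ on $V_{jkl}$ — each of which, by the type-$n$ hypothesis, reads $T_{ab}w_b = w_a + \sum_{|\alpha|=n+1} f_{ba,\alpha}(z_a)\,w_a^\alpha + O(|w_a|^{n+2})$ — I would apply $T_{jk}$ to the $(k,l)$-expansion and use $T_{jl}=T_{jk}T_{kl}$ to obtain a second expression for $T_{jl}w_l$, then equate it with the direct $(j,l)$-expansion modulo $O(|w_j|^{n+2})$. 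Note that $|w_k|$ and $|w_j|$ are comparable (since $T_{kj}$ is unitary and the type-$n$ correction is $O(|w_j|^{n+1})$), so the remainder orders match up.

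The key reduction rests on the fact that, since $\{(V_j,w_j)\}$ is of type $n$, one has $w_k = T_{kj}w_j + O(|w_j|^{n+1})$ on $V_{jk}$, and hence for every multi-index $\alpha$ with $|\alpha|=n+1$,
\[
w_k^\alpha \equiv (T_{kj}w_j)^\alpha = \sum_{|\beta|=n+1}\tau^\alpha_{kj,\beta}\,w_j^\beta \pmod{O(|w_j|^{n+2})},
\]
where the second equality is the very definition of the transition coefficients $\tau^\alpha_{kj,\beta}$ of $S^{n+1}N_{Y/X}^*$ applied with the roles of $j$ and $k$ swapped relative to \S\ref{section:notation}. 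Extracting the coefficient of $w_j^\beta$ in the $\lambda$-component of the central identity then gives, for every $\lambda$ and every $\beta$ with $|\beta|=n+1$, the pointwise relation
\[
f^\lambda_{lj,\beta}(z_j) - f^\lambda_{kj,\beta}(z_j) = \sum_{\mu=1}^r\sum_{|\alpha|=n+1}(T_{jk})^\lambda_\mu\,\tau^\alpha_{kj,\beta}\,f^\mu_{lk,\alpha}(z_k)
\]
on $U_{jkl}$, where $f^\mu_{lk,\alpha}(z_k)$ is understood as a holomorphic function on $U_{jkl}$ via the base-change on $Y$.

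The remaining step is to recognize this identity as the \v{C}ech cocycle condition $c_{jl}=c_{jk}+c_{kl}$ for the proposed cochain $c_{ab}:=\sum_\lambda e_{a,\lambda}^*\otimes f^\lambda_{ba,n+1}$. Using the dual transition $e_{k,\mu}^*=\sum_\lambda (T_{jk})^\lambda_\mu\,e_{j,\lambda}^*$ (which follows immediately from $e_j^\lambda=\sum_\mu(T_{jk})^\lambda_\mu e_k^\mu$ by dualizing) together with $e_k^\alpha=\sum_\beta\tau^\alpha_{kj,\beta}\,e_j^\beta$, the section $c_{kl}$ expressed in the $j$-frame has $(\lambda,\beta)$-component exactly equal to the RHS above, completing the verification. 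The only real obstacle is the careful index-bookkeeping: one must verify that the polynomial expansion of $(T_{kj}w_j)^\alpha$ coincides with the symmetric-power transition law and that the dual-basis conventions are consistent — both are purely algebraic and immediate from \S\ref{section:notation}.
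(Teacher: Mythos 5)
Your argument is correct and is essentially the paper's proof: the paper sums the three expansions of $T_{jk}w_k-w_j$, $T_{jk}(T_{k\ell}w_\ell-w_k)$ and $T_{j\ell}(T_{\ell j}w_j-w_\ell)$ on $V_{jk\ell}$ (which telescopes to zero by $T_{j\ell}=T_{jk}T_{k\ell}$) and compares coefficients of $w_j^\beta$ with $|\beta|=n+1$, which is the same computation as your ``compose the $(j,k)$- and $(k,\ell)$-expansions and equate with the $(j,\ell)$-expansion'' rearrangement. Your identification of the resulting identity with the \v{C}ech condition via $e_{k,\mu}^*=\sum_\lambda(T_{jk})^\lambda_\mu e_{j,\lambda}^*$ and $e_k^\alpha=\sum_\beta\tau^\alpha_{kj,\beta}e_j^\beta$ is also exactly what the paper's conventions require.
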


\begin{proof}
The lemma can be shown by summing the expansions of $T_{jk} w_k-w_j$, $T_{jk}\cdot(T_{k\ell} w_\ell-w_k)$, and $T_{j\ell}\cdot(T_{\ell j} w_j-w_\ell)$ on $V_{jk\ell}$ and comparing the terms with $w_j^\alpha$ of the both hand sides for each $\alpha$ with $|\alpha|=n+1$. 
\end{proof}

\begin{definition}
For a system $\{(V_j, w_j)\}$ of type $n$, 
we denote by $u_n(Y, X)=u_n(Y, X; \{w_j\})$ the class $[\{(U_{jk}, \textstyle\sum_{\lambda=1}^re_{j, \lambda}^*\otimes f_{kj, n+1}^\lambda)\}]\in H^1(Y, N_{Y/X}\otimes S^{n+1}N_{Y/X}^*)$ and call it the $n$-th obstruction class. 
\end{definition}

\begin{lemma}\label{lem:shougai_1}
Let $\{(V_j, w_j)\}$ be a system of type $n$ with $dw_j|_{U_j}=e_j$ for each $j$. 
Assume $u_n(Y, X; \{w_j\})=0$. 
Then there exists a system $\{(U_j, \widehat{w}_j)\}$ of type $n+1$ with $d\widehat{w}_j|_{U_j}=e_j$ for each $j$. 
\end{lemma}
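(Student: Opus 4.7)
The plan is to find a holomorphic perturbation of each $w_j$ by a polynomial of degree $n+1$ in $w_j$, whose coefficients come from a primitive of the obstruction cocycle. This is the natural generalization to higher codimension of the standard step in Ueda's original argument.

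Since $u_n(Y,X;\{w_j\})=0$, I would pick a $0$-cochain $\{(U_j,\eta_j)\}$ with $\eta_j\in\Gamma(U_j,N_{Y/X}\otimes S^{n+1}N_{Y/X}^*)$ whose \v{C}ech coboundary equals the defining cocycle $\{(U_{jk},\sum_{\lambda}e^*_{j,\lambda}\otimes f^\lambda_{kj,n+1})\}$. Writing
\[
\eta_j = \sum_{\lambda=1}^{r}\sum_{|\alpha|=n+1}\eta^\lambda_{j,\alpha}(z_j)\,e^*_{j,\lambda}\otimes e_j^\alpha
\]
with holomorphic $\eta^\lambda_{j,\alpha}$ on $U_j$, and extending each $\eta^\lambda_{j,\alpha}$ to $V_j$ via the pull-back $p_j^{*}$ of \S\ref{section:notation}, I would set
\[
\widehat{w}_j^\lambda := w_j^\lambda - \sum_{|\alpha|=n+1}\eta^\lambda_{j,\alpha}(z_j)\,w_j^\alpha
\]
(the sign depending on the \v{C}ech convention used in \S\ref{section:def_of_obstr_class}).

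That $\{(V_j,\widehat{w}_j)\}$ is a valid defining functions system with $d\widehat{w}_j|_{U_j}=e_j$ is immediate: the correction vanishes on $U_j$ to order $n+1\geq 2$, so $d\widehat{w}_j$ and $dw_j=e_j$ agree along $U_j$, and the implicit function theorem gives $\{\widehat{w}_j=0\}=U_j$ after possibly shrinking $V_j$.

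The main step is to verify that $\{(V_j,\widehat{w}_j)\}$ has type $n+1$. On $V_{jk}$, the type-$n$ assumption gives $w_k=T_{jk}^{-1}w_j+O(|w_j|^{n+1})$; since $\eta_k$ is homogeneous of degree $n+1$ in $w_k$, this substitution only produces errors of order $O(|w_j|^{n+2})$. A direct expansion then yields
\[
T_{jk}\widehat{w}_k - \widehat{w}_j \equiv \sum_{|\alpha|=n+1}f_{kj,\alpha}(z_j)\,w_j^\alpha - \bigl(\eta_j(z_j,w_j) - T_{jk}\eta_k(z_j,T_{jk}^{-1}w_j)\bigr) \pmod{O(|w_j|^{n+2})}.
\]
The main obstacle is identifying the bracketed expression with the \v{C}ech coboundary $(\delta\eta)_{jk}$ expressed in the $U_j$-frame. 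This amounts to checking that the polynomial substitution $w_k^\alpha\mapsto (T_{jk}^{-1}w_j)^\alpha$, combined with the $T_{jk}$-action on the $N_{Y/X}$-factor, reproduces the transition of $N_{Y/X}\otimes S^{n+1}N_{Y/X}^*$ from the $U_k$-frame to the $U_j$-frame, the symmetric-tensor part being exactly the matrix $\tau^\alpha_{jk,\beta}$ of \S\ref{section:notation}. Once that identification is made, the coboundary relation $\delta\eta=\{e^*_{j,\lambda}\otimes f^\lambda_{kj,n+1}\}$ forces the right-hand side to vanish modulo $O(|w_j|^{n+2})$, which is exactly the statement that $\{(V_j,\widehat{w}_j)\}$ is of type $n+1$.
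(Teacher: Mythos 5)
Your proposal is correct and follows essentially the same route as the paper: choose a $0$-cochain primitive of the obstruction cocycle, add (or subtract, depending on sign convention) the corresponding degree-$(n+1)$ homogeneous polynomial in $w_j$ to each $w_j$, and check via the expansion on $V_{jk}$ — using that the substitution $w_k\mapsto T_{kj}w_j+O(|w_j|^{n+1})$ into a degree-$(n+1)$ homogeneous term only affects orders $\geq n+2$, and that the transported expression $T_{jk}\eta_{k,\beta}\tau^\beta_{kj,\alpha}$ is exactly the transition of $N_{Y/X}\otimes S^{n+1}N_{Y/X}^*$ — that the new system is of type $n+1$. This matches the paper's construction $\widehat{w}_j^\lambda=w_j^\lambda+\sum_{|\alpha|=n+1}f_{j,\alpha}^\lambda w_j^\alpha$ with $f_{j,\alpha}-\sum_\beta T_{jk}f_{k,\beta}\tau^\beta_{kj,\alpha}=f_{kj,\alpha}$.
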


\begin{proof}
From the assumption $u_n(Y, X; \{w_j\})=0$, we can take 
\[
f_{j}=
\left(
    \begin{array}{c}
      f_{j}^1 \\
      f_{j}^2 \\
	\vdots \\
	f_{j}^r
    \end{array}
  \right)
=\sum_{|\alpha|=n+1}
\left(
    \begin{array}{c}
      f_{j, \alpha}^1 \\
      f_{j, \alpha}^2 \\
	\vdots \\
	f_{j, \alpha}^r
    \end{array}
  \right)\cdot e_j^\alpha
\]
such that 
\[
f_{j, \alpha}-\sum_{|\beta|=n+1} T_{jk}f_{k, \beta}\tau^\beta_{kj, \alpha}=f_{kj, \alpha}
\]
holds on $U_{jk}$ for each $\alpha$ with $|\alpha|=n+1$. 
Define a new system $\{\widehat{w}_j\}$ by 
\[
\widehat{w}_j^\lambda:=w_j^\lambda+\sum_{|\alpha|=n+1}f_{j, \alpha}^\lambda(z_j) \cdot w_j^\alpha. 
\]
Then it follows from a simple computation that the system $\{\widehat{w}_j\}$ is of type $n+1$ with $d\widehat{w}_j|_{U_j}=dw_j|_{U_j}$, which proves the lemma. 
\end{proof}

\begin{remark}\label{rmk:line_bdl_r_setting}
Here we consider the case where $N_{Y/X}$ admits a direct decomposition $N_{Y/X}=N_1\oplus N_2\oplus\cdots\oplus N_r$ such that each $N_\lambda$ is a unitary flat line bundle on $Y$. 
It follows from Schur's lemma that such $N_\lambda$'s are unique up to ordering and isomorphism (Note that, as we mentioned in Remark \ref{rmk:injectivity_of_the_natural_map}, 
two unitary flat line bundles are isomorphic to each other iff the corresponding unitary representations coincide, see also \cite[Proposition 1 (2)]{U}). 
In this case, the transition matrix $T_{jk}$ is written in the form $T_{jk}={\rm diag}\,(t_{jk}^1, t_{jk}^2, \dots, t_{jk}^r)$ ($t_{jk}^\lambda\in U(1)$). 
Then it holds that 
\[
  \tau^\alpha_{kj, \beta} = \begin{cases}
    t_{jk}^{-\alpha}:=\prod_{\lambda=1}^r(t_{jk}^\lambda)^{-\alpha_\lambda} & (\beta=\alpha) \\
    0 & ({\rm otherwise}), 
  \end{cases}
\]
which induces a direct decomposition 
$N_{Y/X}\otimes S^{n+1}N_{Y/X}^*
=\textstyle\bigoplus_{\lambda=1}^r\textstyle\bigoplus_{|\alpha|=n+1}N_\lambda\otimes N_\alpha^{-1}$ ($N_\alpha:=\textstyle\bigotimes_{\lambda=1}^rN_\lambda^{\alpha_\lambda}$). 
Accordingly, we have a decomposition 
\[
u_n(Y, X; \{w_j\})=(u^\lambda_\alpha(Y, X; \{w_j\}))_{\lambda, \alpha}
\in \bigoplus_{\lambda=1}^r\bigoplus_{|\alpha|=n+1}H^1(Y, N_\lambda\otimes N_\alpha^{-1}) 
\]
of the $n$-th obstruction class in this case. 
It is easily observed that $u_{\alpha}^\lambda(Y, X; \{w_j\})=[\{(U_{jk}, f_{kj, \alpha}^\lambda)\}]\in H^1(Y, N_\lambda\otimes N_\alpha^{-1})$. 
\end{remark}

\subsection{Well-definedness of the obstruction classes and the type of the pair $(Y, X)$}

Take $\{U_j\}, \{V_j\}$, $\{e_j\}, \{w_j\}$, and $\{T_{jk}\}$ as in \S \ref{section:notation}. 
In this subsection, we study the dependence of the $n$-th obstruction class $u_n(Y, X; \{w_j\})$ on a system $\{(V_j, w_j)\}$ of type $n$. 

\begin{lemma}\label{lem:welldef_1}
Let $\{(V_j, w_j)\}$ and $\{(V_j, \widehat{w}_j)\}$ be systems of type $n$ such that $dw_j=d\widehat{w}_j=e_j$ holds on each $U_j$. 
Then, $u_n(Y, X; \{w_j\})=u_n(Y, X; \{\widehat{w}_j\})$. 
\end{lemma}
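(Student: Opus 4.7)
The plan is to set $\phi_j := \widehat w_j - w_j$ on each $V_j$ and expand it as a power series in the $w_j$-coordinates,
\[
\phi_j = \sum_{|\alpha|\geq 2} g_{j,\alpha}(z_j)\, w_j^\alpha,
\]
with vector-valued coefficients $g_{j,\alpha}\colon U_j \to \mathbb{C}^r$; the series starts in degree $\geq 2$ because the hypothesis $dw_j|_{U_j} = d\widehat w_j|_{U_j} = e_j$ forces $\phi_j = O(|w_j|^2)$. From here I would expand the transition relation for $\{\widehat w_j\}$, isolate its order $(n+1)$ part in the $w_j$-variable, and exhibit the difference of the two obstruction cocycles as an explicit coboundary.

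First I would decompose $T_{jk}\widehat w_k - \widehat w_j = (T_{jk} w_k - w_j) + (T_{jk}\phi_k(z_k,w_k) - \phi_j(z_j,w_j))$ and use the type $n$ relation $w_k = T_{kj}w_j + O(|w_j|^{n+1})$ obtained from $\{w_j\}$. Writing $\phi_{k,m} := \sum_{|\alpha|=m} g_{k,\alpha}(z_k)\, w_k^\alpha$ and expanding $w_k^\alpha = (T_{kj}w_j + O(|w_j|^{n+1}))^\alpha$, every cross term produced by the $O(|w_j|^{n+1})$ correction contributes at order $\geq (m-1)+(n+1) = m+n \geq n+2$ (since $m\geq 2$), while the leading term $(T_{kj}w_j)^\alpha$ sits at order $m$. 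Consequently only $\phi_{k,n+1}$ can contribute at degree $n+1$, and its contribution is $T_{jk}\phi_{k,n+1}(z_k, T_{kj}w_j)$. Combining this with $\widehat w_j^\alpha \equiv w_j^\alpha \pmod{O(|w_j|^{n+2})}$ for $|\alpha|=n+1$, matching the degree-$(n+1)$ coefficients in $w_j$ of the two expansions of (\ref{eq:exp_of_Tw}) gives, for each $\alpha$ with $|\alpha|=n+1$,
\[
\widehat f_{kj,\alpha}(z_j) - f_{kj,\alpha}(z_j) = \sum_{|\beta|=n+1} T_{jk}\, g_{k,\beta}(z_k)\, \tau^{\beta}_{kj,\alpha} - g_{j,\alpha}(z_j) \quad \text{on } U_{jk}.
\]

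Finally I would introduce the $0$-cochain $\{(U_j,G_j)\}$ with values in $N_{Y/X}\otimes S^{n+1}N_{Y/X}^*$ defined by
\[
G_j := \sum_{|\alpha|=n+1}\Bigl(\sum_{\lambda=1}^r g_{j,\alpha}^{\lambda}\, e_{j,\lambda}^{*}\Bigr)\otimes e_j^{\alpha},
\]
and verify that the right-hand side of the preceding display is precisely the expression on $U_{jk}$ for the coboundary $\delta\{G_j\}$ in the frame $\{e_{j,\lambda}^{*}\otimes e_j^{\alpha}\}$. Then the $1$-cocycles representing $u_n(Y,X;\{w_j\})$ and $u_n(Y,X;\{\widehat w_j\})$ differ by a coboundary and hence define the same class in $H^1(Y,N_{Y/X}\otimes S^{n+1}N_{Y/X}^*)$. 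The main obstacle I anticipate is exactly this last bookkeeping: one has to check that expanding $G_k$ in the $e_j$-frame on $U_{jk}$---using $e_k^\beta = \sum_\alpha \tau^\beta_{kj,\alpha}\, e_j^\alpha$ on the symmetric-power side together with the unitary action $T_{jk}$ on $N_{Y/X}$---reproduces exactly the combination $\sum_\beta T_{jk}\, g_{k,\beta}\, \tau^\beta_{kj,\alpha}$. This is purely formal, but the conventions on duals and symmetric tensors make it notation-heavy.
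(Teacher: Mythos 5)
There is a genuine gap, and it sits exactly where the paper has to work hardest. Your accounting of which pieces of $\phi_k=\widehat w_k-w_k$ can hit degree $n+1$ only tracks the expansion of $w_k^\alpha$ in $w_j$; it ignores that the coefficient functions $g_{k,\alpha}(z_k)$ must themselves be re-expanded on $V_{jk}$ as $g_{k,\alpha}(z_k(z_j,w_j))=g_{k,\alpha}(z_k(z_j,0))+\sum_{|\gamma|\geq 1}g_{kj,\alpha,\gamma}(z_j)\,w_j^\gamma$. For a piece $\phi_{k,m}$ with $2\leq m\leq n$, the term with $|\gamma|=n+1-m\geq 1$ lands precisely in degree $n+1$, so your claim that ``only $\phi_{k,n+1}$ can contribute at degree $n+1$'' is false in general, and the displayed identity for $\widehat f_{kj,\alpha}-f_{kj,\alpha}$ is missing these contributions. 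Your argument is correct only under the extra hypothesis that $\widehat w_j-w_j=O(|w_j|^{n+2})$, i.e.\ the case the paper labels $\nu_0\geq n+1$.

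The paper closes this gap by an induction on the lowest order $\nu_0$ at which $\widehat w_j-w_j$ is nonzero. When $2\leq\nu_0\leq n$, the type-$n$ condition on both systems forces the degree-$\nu_0$ coefficients $\{a_{j,\alpha}\}_{|\alpha|=\nu_0}$ to glue into a global holomorphic section of $N_{Y/X}\otimes S^{\nu_0}N_{Y/X}^*$; Lemma \ref{lem:unitary_section_const} (the maximum principle applied to the unitary flat metric, via Remark \ref{rmk:symtensor_flat}) then shows these coefficients are \emph{constants}, so they can be subtracted off without changing $u_n$, raising $\nu_0$ by one. This is the step where the unitary flatness of $N_{Y/X}$ is used in an essential way, and it is precisely the content your proposal omits. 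You should either add this reduction or restrict your computation to the case where the two systems agree to order $n+1$, which does not cover the statement as given.
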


\begin{proof}
Let 
\begin{eqnarray}
T_{jk} w_k &=& w_j+\sum_{|\alpha|\geq n+1}f_{kj, \alpha}(z_j)\cdot w_j^\alpha, \nonumber \\
T_{jk} \widehat{w}_k &=& \widehat{w}_j+\sum_{|\alpha|\geq n+1}\widehat{f}_{kj, \alpha}(z_j)\cdot \widehat{w}_j^\alpha \nonumber
\end{eqnarray}
be the expansions as in (\ref{eq:exp_of_Tw}). 
It holds from the assumption $dw_j=d\widehat{w}_j$ that the expansion of $\widehat{w}_j$ in $w_j$ is in the form of 
$\widehat{w}_j^\lambda=w_j^\lambda+\textstyle\sum_{|\alpha|\geq 2}a_{j, \alpha}^\lambda(z_j)\cdot w_j^\alpha$, which in what follows we will denote by 
\[
\widehat{w}_j=w_j+\sum_{|\alpha|\geq 2}a_{j, \alpha}(z_j)\cdot w_j^\alpha. 
\]
Let $\nu_0$ be the maximum of the set of all $\nu\in\mathbb{Z}_{\geq 2}$ such that $a_{j, \alpha}\equiv 0$ holds for any $\alpha$ with $|\alpha|<\nu$ for each $j$. 
When $\nu_0>n+1$, it follows from $\widehat{w}_j=w_j+O(|w_j|^{n+2})$ that $f_{kj, \alpha}=\widehat{f}_{kj, \alpha}$, which proves the lemma. 
When $\nu_0=n+1$, we can calculate that 
\begin{eqnarray}\label{eq:equation_1}
T_{jk}\widehat{w}_k-\widehat{w}_j
&=&\sum_{|\beta|=n+1}\left(f_{kj, \beta}-a_{j, \beta}+T_{jk}\sum_{|\alpha|=n+1}a_{k, \alpha}\cdot \tau_{kj, \beta}^{\alpha}\right)\cdot w_j^\beta
+O(|w_j|^{n+2}). 
\end{eqnarray}
By comparing the coefficients, we obtain the equation 
\[
\left[\delta\left\{\left(U_j, \sum_{\lambda=1}^r\sum_{|\beta|=n+1}a_{j, \beta}^\lambda\cdot e_{j, \beta}^*\otimes e_j^\beta\right)\right\}\right]
=u_n(Y, X; \{w_j\})-u_n(Y, X; \{\widehat{w}_j\})
\]
in $\check{Z}^1(\{U_j\}, N_{Y/X}\otimes S^{n+1}N_{Y/X}^*)$, which proves the lemma. 
Finally, we will show the lemma for $\nu_0=\nu$ by assuming the lemma for $\nu_0=\nu+1$. 
As we may assume that $2\leq \nu\leq n$, it holds from the calculation as (\ref{eq:equation_1}) that $\{(U_{j}, \textstyle\sum_{\lambda, |\beta|=\nu}a_{j, \beta}^\lambda\cdot e_{j, \lambda}^*\otimes e_j^\beta)\}$ glue up to define a global section of $N_{Y/X}\otimes S^\nu N_{Y/X}^*$. 
Therefore, by Lemma \ref{lem:unitary_section_const} and Remark \ref{rmk:symtensor_flat}, it turns out that $a_{j, \alpha}^\lambda$ is a constant function on $U_j$ for each $\alpha$ with $|\alpha|=\nu$. 
Define a new system $\{v_j\}$ by 
$v_j^\lambda:=\widehat{w}_j^\lambda-\textstyle\sum_{|\alpha|=\nu}a_{j, \alpha}^\lambda w_j^\alpha$. 
It is easy to see that $u_n(Y, X; \{\widehat{w}_j\})=u_n(Y, X; \{v_j\})$ holds 
(Use $T_{jk}w_k=w_j+O(|w_j|^{n+1})$ and the fact that each $a_{j, \alpha}^\lambda$ is a constant). 
As $u_n(Y, X; \{w_j\})=u_n(Y, X; \{v_j\})$ holds from the lemma for $\nu_0=\nu+1$, we obtain the equation $u_n(Y, X; \{\widehat{w}_j\})=u_n(Y, X; \{w_j\})$. 
\end{proof}

\begin{proposition}\label{prop:welldef_3}
Let $\{(U_j, e_j)\}$ be a local frame of $N_{Y/X}^*$ as in \S \ref{section:notation}. 
Then one and only one of the following holds: \\
$(i)$ There exists $n\geq 1$ and a system $\{w_j\}$ of type $n$ with $dw_j|_{U_j}=e_j$ and $u_n(Y, X; \{w_j\})\not=0$. 
In this case, there is no system $\{\widehat{w}_j\}$ of type $\nu$ with $d\widehat{w}_j|_{U_j}=e_j$ for any $\nu>n$. \\
$(ii)$ For each $n\geq 1$, there exists a system $\{w_j\}$ of type $n$ with $dw_j|_{U_j}=e_j$ and $u_n(Y, X; \{w_j\})=0$. 
\end{proposition}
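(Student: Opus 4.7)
The plan is to combine Lemma \ref{lem:welldef_1}, which tells us that the $n$-th obstruction class is independent of the choice of system of type $n$ satisfying $dw_j|_{U_j}=e_j$, with an inductive construction based on Lemma \ref{lem:shougai_1}.

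First I would observe that the construction in \S \ref{section:notation} already produces a system $\{(V_j, w_j)\}$ of type $1$ satisfying $dw_j|_{U_j} = e_j$: once each $w_j$ has been rescaled so that $dw_j|_{U_j} = e_j$, the expansion \eqref{eq:exp_of_Tw} begins at order $|\alpha|\geq 2$, so $f_{kj,\alpha}\equiv 0$ for all $|\alpha|\leq 1$. The argument then runs by induction: given a system $\{w_j\}$ of type $n$ with $dw_j|_{U_j} = e_j$, either $u_n(Y, X; \{w_j\}) \neq 0$, in which case case $(i)$ is realized at this $n$, or $u_n(Y, X; \{w_j\}) = 0$, in which case Lemma \ref{lem:shougai_1} supplies a system of type $n+1$ with the same derivative condition and the induction continues. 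If the induction never terminates then $(ii)$ holds.

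To finish, I would verify the uniqueness assertion inside $(i)$ and the mutual exclusivity of $(i)$ and $(ii)$. For the former, suppose $u_n(Y, X; \{w_j\}) \neq 0$ for some system $\{w_j\}$ of type $n$ with $dw_j|_{U_j}=e_j$, and assume for contradiction that a system $\{\widehat{w}_j\}$ of some type $\nu > n$ with $d\widehat{w}_j|_{U_j}=e_j$ exists. Since a system of type $\nu$ is automatically of type $n$, Lemma \ref{lem:welldef_1} yields $u_n(Y, X; \{\widehat{w}_j\}) = u_n(Y, X; \{w_j\}) \neq 0$. On the other hand, the definition of type $\nu$ forces $\widehat{f}_{kj, \alpha} \equiv 0$ for $|\alpha| = n+1$, whence $u_n(Y, X; \{\widehat{w}_j\}) = 0$, a contradiction. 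The same appeal to Lemma \ref{lem:welldef_1} rules out the simultaneous occurrence of $(i)$ and $(ii)$: feeding the type-$n$ system produced by $(ii)$ into the well-definedness statement would force the non-zero class from $(i)$ to vanish.

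The hard part here is essentially just book-keeping, since the real content is carried by Lemmas \ref{lem:welldef_1} and \ref{lem:shougai_1}. The single subtle point to keep track of is that a system of type $\nu$ is automatically of type $n$ for every $n \leq \nu$, and this is precisely what permits applying Lemma \ref{lem:welldef_1} at level $n$ to a system that actually lives at higher type.
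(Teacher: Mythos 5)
Your proposal is correct and follows essentially the same route as the paper: the entire content is the equivalence ``$u_n(Y,X;\{w_j\})=0$ iff a system of type $n+1$ with $d\widehat{w}_j|_{U_j}=e_j$ exists,'' obtained by combining Lemma \ref{lem:shougai_1} (for the construction direction) with Lemma \ref{lem:welldef_1} (for well-definedness, applied after noting that a type-$\nu$ system is also of type $n$ for $n\leq\nu$ and has vanishing $u_n$). The paper states this equivalence in one line; you have merely unpacked the same bookkeeping explicitly.
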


\begin{proof} 
Let $\{w_j\}$ be a system of type $n$ with $dw_j|_{U_j}=e_j$ for each $j$. 
Then, by Lemma \ref{lem:shougai_1} and Lemma \ref{lem:welldef_1}, 
we obtain that $u_n(Y, X; \{w_j\})=0$ iff there exists a system $\{\widehat{w}_j\}$ of type $n+1$ with $d\widehat{w}_j|_{U_j}=e_j$, which shows the proposition. 
\end{proof}

\begin{definition}
We define the type of the pair $(Y, X)$ as follows: 
${\rm type}\,(Y, X):=n$ for the case of Proposition \ref{prop:welldef_3} $(i)$, and 
${\rm type}\,(Y, X):=\infty$ for the case of Proposition \ref{prop:welldef_3} $(ii)$. 
\end{definition}

\begin{lemma}\label{prop:welldef_2}
${\rm type}\,(Y, X)$ does not depend on the choice of $\{e_j\}$. 
\end{lemma}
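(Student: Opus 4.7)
The strategy is to show that any two unitary frames of $N_{Y/X}^*$ differ by constant $GL_r(\mathbb{C})$-matrices on each simply connected $U_j$, and that such a constant change of frame induces a linear substitution $w_j' = M_j w_j$ which preserves the ``type $n$'' condition on systems of defining functions. Combined with Proposition \ref{prop:welldef_3}, this will give the claim.

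For the first step, let $\{e_j'\}$ be a second unitary frame of $N_{Y/X}^*$ with transition matrices $T_{jk}' \in U(r)$, and write $e_j' = M_j e_j$ on $U_j$ for some holomorphic $M_j \colon U_j \to GL_r(\mathbb{C})$. I would first invoke Lemma \ref{lem:gl_flat_str_unique} together with the injectivity of $H^1(Y, U(r)) \to H^1(Y, GL_r(\mathcal{O}_Y))$ recorded in Remark \ref{rmk:injectivity_of_the_natural_map} to produce constants $C_j \in U(r)$ satisfying $T_{jk}' = C_j T_{jk} C_k^{-1}$. The auxiliary frame $\widetilde{e}_j := C_j e_j$ then realizes the cocycle $\{T_{jk}'\}$, so $e_j' = D_j \widetilde{e}_j$ for a holomorphic $D_j$ with $D_j T_{jk}' = T_{jk}' D_k$ on $U_{jk}$. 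The datum $\{D_j\}$ represents a global holomorphic section of the unitary flat bundle ${\rm End}(N_{Y/X}^*) \cong N_{Y/X}^* \otimes N_{Y/X}$, so by Lemma \ref{lem:unitary_section_const} applied entrywise in an orthonormal frame (in the spirit of Remark \ref{rmk:symtensor_flat}), $D_j$ is constant on each $U_j$. Hence $M_j = D_j C_j$ is a constant element of $GL_r(\mathbb{C})$.

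For the second step, given a system $\{(V_j, w_j)\}$ of type $n$ with $dw_j|_{U_j} = e_j$, I would set $w_j' := M_j w_j$, where the constant matrix $M_j$ is extended to $V_j$ trivially. Then $dw_j'|_{U_j} = M_j e_j = e_j'$, and using the intertwining relation $M_j T_{jk} = T_{jk}' M_k$,
\[
T_{jk}' w_k' - w_j' = M_j(T_{jk} w_k - w_j) = M_j \sum_{|\alpha| \geq n+1} f_{kj, \alpha}(z_j)\cdot w_j^\alpha.
\]
The substitution $w_j = M_j^{-1} w_j'$ rewrites each $w_j^\alpha$ as a homogeneous polynomial of degree $|\alpha|$ in $w_j'$, so the right-hand side becomes a power series in $w_j'$ vanishing to order $n+1$. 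Therefore $\{(V_j, w_j')\}$ is a system of type $n$ with respect to $\{e_j'\}$. By symmetry (swapping the roles of the two frames and using $M_j^{-1}$), the existence of a type-$n$ system for $\{e_j\}$ is equivalent to the existence of one for $\{e_j'\}$, for every $n \geq 1$. Proposition \ref{prop:welldef_3} then yields the equality of the two types.

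The main obstacle is the first step: establishing that the change-of-frame matrix $M_j$ is locally constant on each $U_j$. This relies on combining the injectivity from Remark \ref{rmk:injectivity_of_the_natural_map} with Lemma \ref{lem:unitary_section_const} applied to the unitary flat endomorphism bundle. Once this is in hand, the type-preservation is a direct power-series computation driven by the linearity of the substitution $w_j = M_j^{-1} w_j'$.
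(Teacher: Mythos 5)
Your proposal is correct and follows essentially the same route as the paper: take the constant change-of-frame matrices $M_j\in GL_r(\mathbb{C})$ with $\widehat{e}_j=M_je_j$ (the paper cites Lemma \ref{lem:gl_flat_str_unique} for this, whose proof is exactly your argument via Lemma \ref{lem:unitary_section_const} applied to the flat endomorphism bundle), define $\widehat{w}_j:=M_jw_j$, and check by the linear substitution that the type-$n$ condition is preserved, concluding with Proposition \ref{prop:welldef_3}. Your first step merely makes explicit what the paper's citation leaves implicit, so there is no substantive difference.
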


\begin{proof}
Let $\{(U_j, e_j)\}$ and $\{(U_j, \widehat{e}_j)\}$ be local frames of $N_{Y/X}^*$ with $e_j=T_{jk}e_k$ and $\widehat{e}_j=\widehat{T}_{jk}\widehat{e}_k$ on each $U_{jk}$ ($T_{jk}, \widehat{T}_{jk}\in U(r)$). 
Assume that there exists a system $\{w_j\}$ of type $n$ with $dw_j|_{U_j}=e_j$. 
By Proposition \ref{prop:welldef_3}, it is sufficient to show the existence of a system $\{\widehat{w}_j\}$ of type $n$ with $d\widehat{w}_j|_{U_j}=\widehat{e}_j$. 

Let 
\[
T_{jk} w_k = w_j+\sum_{|\alpha|\geq n+1}f_{kj, \alpha}(z_j)\cdot w_j^\alpha. 
\]
be the expansion (\ref{eq:exp_of_Tw}) for the system $\{w_j\}$. 
From Lemma \ref{lem:gl_flat_str_unique}, we can take $M_j\in GL_r(\mathbb{C})$ with $\widehat{e}_j=M_j\cdot e_j$. 
Note that $M_jT_{jk}=\widehat{T}_{jk}M_k$ for each $j, k$. 
Define a new system $\{\widehat{w}_j\}$ by
$\widehat{w}_j^\lambda:=\textstyle\sum_{\mu=1}^r(M_j)^\lambda_\mu\cdot w_j^\mu$. 
Then it clearly holds that $d\widehat{w}_j=\widehat{e}_j$. 
We can calculate that
\[
\widehat{T}_{jk} \widehat{w}_k
= \widehat{w}_j+\sum_{|\alpha|\geq n+1}M_j\cdot f_{kj, \alpha}\cdot \prod_{\lambda=1}^r\left(\sum_{\mu=1}^r(M_j^{-1})^\lambda_\mu \cdot \widehat{w}_j^\mu\right)^{\alpha_\nu} 
= \widehat{w}_j+O(|\widehat{w}_j|^{n+1}), 
\]
which proves the lemma. 
\end{proof}



\section{Proof of Theorem \ref{thm:main}}

\subsection{Outline}\label{section:outline_of_proof}

Let $\{U_j\}, \{V_j\}$, $\{e_j\}$, $\{T_{jk}\}$, and $\{w_j\}$ be as in \S \ref{section:notation}. 
We will prove Theorem \ref{thm:main} based on the same idea as in the proof of \cite[Theorem 3]{U} and \cite[Theorem 1.4]{KO}. 
We will construct a new system $\{u_j\}$ as the solution of a functional equation 
\begin{equation}\label{eq:func_eq}
w_j=u_j+\sum_{|\alpha|\geq 2}F_{j, \alpha}(z_j)\cdot u_j^\alpha, 
\end{equation}
where the coefficient functions 
\[
F_{j, \alpha}(z_j)=
\left(
    \begin{array}{c}
      F_{j, \alpha}^1(z_j) \\
      F_{j, \alpha}^2(z_j) \\
	\vdots \\
	F_{j, \alpha}^r(z_j)
    \end{array}
  \right)
\]
are holomorphic functions which we will construct 
in \S \ref{section:def_of_F_j_alpha} so that $\{u_j\}$ exists and satisfies $T_{jk}u_k=u_j$ on a neighborhood of $U_{jk}$ for each $j, k$ (Note that it follows from the inverse function theorem that there exists a unique solution $u_j$ if $\textstyle\sum_{|\alpha|\geq 2}F_{j, \alpha}(z_j)\cdot u_j^\alpha$ has a positive radius of convergence). 
After taking such a solution $\{u_j\}$, we obtain Theorem \ref{thm:main} $(i)$ by considering a foliation $\mathcal{F}$ whose leaves are locally defined by ``$u_j=$(constant)''. 

Theorem \ref{thm:main} $(ii)$ is also shown by considering the same functional equation (\ref{eq:func_eq}). 
Under the assumption of Theorem \ref{thm:main} $(ii)$, we will construct an initial system $\{w_j\}$ so that $\{w_j^1=0\}=V_j\cap S$ in \S \ref{section:initial_w_j_constr}. 
Starting from such an initial system, 
we will see in \S \ref{section:def_of_F_j_alpha} that one can choose coefficient functions $\{F_{j, \alpha}\}$ so that the following additional property holds for each $n\geq 2$: 
\begin{description}
\item[(Property)$_n$] $F_{j, \alpha}^1\equiv 0$ holds for any $\alpha$ with $|\alpha|=n$ and $\alpha_1=0$. 
\end{description}

Then it holds that the solution $\{u_j\}$ of the functional equation (\ref{eq:func_eq}) also satisfies $\{u_j^1=0\}=V_j\cap S$ for each $j$. 
By considering a foliation $\mathcal{G}_S$ whose leaves are locally defined by ``$u_j^1=$(constant)'', we obtain Theorem \ref{thm:main} $(ii)$. 

\begin{remark}
It may seem that the foliations $\mathcal{F}$ we will construct in the proof of Theorem \ref{thm:main} $(i)$ and $(ii)$ are different from each other at first sight. 
Actually, the solutions $u_j$'s we will obtain are different from each other. 
However, the foliation $\mathcal{F}$ itself does not depend on such differences. 
It can be shown by the following fact, which is obtained by the same arguments as in Lemma \ref{lem:welldef_1} and Proposition \ref{prop:welldef_2}: 
Let $\{u_j\}$ and $\{\widehat{u}_j\}$ be systems with $u_j=T_{jk}u_k$ and $\widehat{u}_j=\widehat{T}_{jk}\widehat{u}_k$ on each $V_{jk}$ ($T_{jk}, \widehat{T}_{jk}\in U(r)$). 
Then there exist $M_j\in GL_r(\mathbb{C})$ and $a_{j, \alpha}\in\mathbb{C}^r$ for each $j$ and $\alpha$ such that $\widehat{u}_j=M_j\cdot (u_j+\textstyle\sum_{|\alpha|\geq 2}a_{j, \alpha}\cdot u_j^\alpha)$
holds on each $V_j$. 
\end{remark}

\subsection{Construction of the initial system $\{w_j\}$}\label{section:initial_w_j_constr}

We can use any system $\{w_j\}$ with $dw_j|_{U_j}=e_j$ as an initial system for the proof of Theorem \ref{thm:main} $(i)$. 
In what follows, we explain the construction of the initial system $\{w_j\}$ under the assumption of $(ii)$. 

By Remark \ref{rmk:unitary_flat_subbundle} and the complete reducibility of the unitary representation, it follows that the short exact sequence 
$0\to N_{Y/S}\to N_{Y/X}\to N_1\to 0$ splits, where $N_1:=N_{S/X}|_Y$. 
Let $e_j^1$ be a local frame of $N_1^{-1}|_{U_j}$ with $e_j^1=t_{jk}^1e_k^1$ on $U_{jk}$ ($t_{jk}^1\in U(1)$), 
and $e_j'=(e_j^2, e_j^3, \dots, e_j^r)$ be a local frame of $N_{Y/S}^*|_{U_j}$with $e_j'=S_{jk}e_k'$ on $U_{jk}$ ($S_{jk}\in U(r-1)$). 
Take a defining function $w_j^1$ of $V_j\cap S$ in $V_j$ for each $j$. 
By a simple argument, it one can choose $\{w_j^1\}$ so that $dw_j^1=e_j^1$ holds on each $U_j$. 

\begin{lemma}\label{lem:nice_ext}
Let $w_j^1$ be a defining function of $V_j\cap S$ in $V_j$ with $dw_j^1|_{U_j}=e_j^1$ for each $j$, 
and $\{(V_j\cap S, v_j)\}=\{(V_j\cap S, (v_j^2, v_j^3, \dots, v_j^r))\}$ be a local defining functions system of $Y\subset V\cap S$ with $dv_j|_{U_j}=e_j'$ for each $j$. 
Then there exists a holomorphic function $w_j^\lambda\colon V_j\to \mathbb{C}$ with $w_j^\lambda|_{V_j\cap S}=v_j^\lambda$ for each $\lambda=2, 3, \dots, r$ such that 
$w_j:=(w_j^1, w_j^2, \dots, w_j^r)$ satisfies $dw_j=T_{jk}dw_k$ on each $U_{jk}$, where 
\[
T_{jk}:=
\left(
    \begin{array}{c|ccc}
       t_{jk}^1&0&\cdots&0\\ \hline
       0&&& \\
	 \vdots&&S_{jk}& \\
	 0&&&
    \end{array}
  \right). 
\]

\end{lemma}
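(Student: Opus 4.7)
The plan is to extend each $v_j^\lambda$ from $V_j\cap S$ to $V_j$ arbitrarily, correct the extension by a scalar multiple of $w_j^1$ so that its differential along $U_j$ matches the prescribed frame element $e_j^\lambda$, and then observe that the desired cocycle identity $dw_j=T_{jk}\,dw_k$ on $U_{jk}$ follows automatically from $e_j=T_{jk}e_k$.

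For the first step, after shrinking $V_j$ if necessary, I would pick any holomorphic extension $\widetilde{w}_j^\lambda\colon V_j\to\mathbb{C}$ of $v_j^\lambda$ for each $j$ and each $\lambda\in\{2,\dots,r\}$; this is unproblematic since $V_j$ is Stein and $V_j\cap S=\{w_j^1=0\}$ is a smooth hypersurface, so one may, for instance, pull $v_j^\lambda$ back along the local retraction onto $\{w_j^1=0\}$ coming from the coordinate $w_j^1$.

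To correct the differential, I would invoke the direct-sum splitting $N_{Y/X}^*=N_{S/X}^*\oplus N_{Y/S}^*$ produced by complete reducibility of $\rho_{N_{Y/X}^*}$ (cf.\ Remark \ref{rmk:unitary_flat_subbundle}) --- this is the very splitting underlying the block-diagonal form of $T_{jk}$. Then $e_j^1$ is a frame of $N_{S/X}^*|_{U_j}$ and $e_j'=(e_j^2,\dots,e_j^r)$ is a frame of $N_{Y/S}^*|_{U_j}$, both canonically inside $N_{Y/X}^*|_{U_j}$. At a point $p\in U_j$ the differential $d\widetilde{w}_j^\lambda|_p$ lies in $N_{Y/X,p}^*$, and its image in the quotient $N_{Y/S,p}^*$ coincides with $dv_j^\lambda|_p=e_j^\lambda|_p$; via the splitting this gives a unique holomorphic function $\widetilde{c}_j^\lambda\colon U_j\to\mathbb{C}$ with
\[
d\widetilde{w}_j^\lambda|_p=e_j^\lambda|_p+\widetilde{c}_j^\lambda(p)\cdot e_j^1|_p.
\]
I would then set
\[
w_j^\lambda:=\widetilde{w}_j^\lambda-(p_j^*\widetilde{c}_j^\lambda)\cdot w_j^1
\]
on $V_j$. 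Because $w_j^1$ vanishes on $V_j\cap S$, the condition $w_j^\lambda|_{V_j\cap S}=v_j^\lambda$ is preserved, and the Leibniz rule together with $w_j^1|_{U_j}=0$ and $dw_j^1|_{U_j}=e_j^1$ then yields $dw_j^\lambda|_{U_j}=e_j^\lambda$, as desired.

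Finally, with $w_j:=(w_j^1,w_j^2,\dots,w_j^r)$ we have $dw_j|_{U_j}=e_j$ for every $j$. The hypothesis $e_j=T_{jk}e_k$ on $U_{jk}$ --- i.e.\ $e_j^1=t_{jk}^1e_k^1$ and $e_j'=S_{jk}e_k'$ --- therefore delivers $dw_j=T_{jk}\,dw_k$ on each $U_{jk}$ at no extra cost. I do not expect any serious analytic obstacle here; the only point demanding care is fixing once and for all the splitting of $N_{Y/X}^*$ so that the correction term $(p_j^*\widetilde{c}_j^\lambda)\cdot w_j^1$ subtracts exactly the unwanted $N_{S/X}^*$-component and leaves the $N_{Y/S}^*$-component untouched.
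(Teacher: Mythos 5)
Your proof is correct and follows essentially the same route as the paper: both start from an arbitrary holomorphic extension of $v_j^\lambda$ and correct it by subtracting a function times $w_j^1$, with the correction supplied by the splitting of $0\to N_{Y/S}\to N_{Y/X}\to N_1\to 0$ (complete reducibility). The only difference is cosmetic --- the paper exhibits the off-diagonal entries $a_{jk}^\lambda$ of the transition matrix of $\{dw_j\}$ as a \v{C}ech representative of the extension class and solves the coboundary equation for the correction functions $m_j^\lambda$, whereas you read the same functions off directly from a fixed global splitting map of the conormal sequence.
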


\begin{proof}
Take a holomorphic function $w_j^\lambda\colon V_j\to \mathbb{C}$ with $w_j^\lambda|_{V_j\cap S}=v_j^\lambda$ for each $\lambda=2, 3, \dots, r$. 
Then the transition matrix 
$D_{jk}:=({\partial{w_j^\lambda}}/{\partial w_k^\mu}|_{U_{jk}})$ of $\{dw_j\}$ can be written in the form of 
\[
D_{jk}=
\left(
    \begin{array}{c|ccc}
       t_{jk}^1&0&\cdots&0\\ \hline
       a_{jk}^2&&& \\
	 \vdots&&S_{jk}& \\
	 a_{jk}^r&&&
    \end{array}
  \right), 
\]
where $a_{jk}^\lambda(z_j)$ is a holomorphic function defined on $U_j$. 
As 
\begin{eqnarray}
e_{j,1}^*\otimes e_j^1-e_{k,1}^*\otimes e_k^1&=&
e_{j,1}^*\otimes e_j^1-\sum_{\lambda=1}^r\sum_{\mu=1}^r(D_{kj})^1_\lambda\cdot(D_{kj}^{-1})^\mu_1\cdot e_{j,\mu}^*\otimes e_j^\lambda
=-\sum_{\mu=2}^rt_{kj}^1a_{jk}^\mu\cdot e_{j,\mu}^*\otimes e_j^1\nonumber, 
\end{eqnarray}
it holds that the extension class of the short exact sequence $0\to N_{Y/S}\to N_{Y/X}\to N_1\to 0$ is equal to $[\{(U_{jk}, -\textstyle\sum_{\mu=2}^rt_{kj}^1a_{jk}^\mu\cdot e_{j,\mu}^*\otimes e_j^1)\}]$ via the natural isomorphism ${\rm Ext}^1(N_1, N_{Y/S})\cong H^1(Y, N_1^{-1}\otimes N_{Y/S})$. 
Thus we can take $\{(U_j, (m_j^2(z_j), m_j^3(z_j), \dots, m_j^r(z_j)))\}$ such that 
\[
\left(
    \begin{array}{c}
      m_j^2 \\
      m_j^3 \\
	\vdots \\
	m_j^r
    \end{array}
  \right)
-(t_{jk}^1)^{-1}S_{jk}
\left(
    \begin{array}{c}
      m_k^2 \\
      m_k^3 \\
	\vdots \\
	m_k^r
    \end{array}
  \right)
=(t_{jk}^1)^{-1}\cdot
\left(
    \begin{array}{c}
      a_{jk}^2 \\
      a_{jk}^3 \\
	\vdots \\
	a_{jk}^r
    \end{array}
  \right)
\]
holds on each $U_{jk}$, since the short exact sequence splits. 
Let us consider 
\[
M_j:=\left(
    \begin{array}{c|ccc}
       1&0&\cdots&0\\ \hline
       -m_{j}^2&1&&0 \\
	 \vdots&&\ddots& \\
	 -m_{j}^r&0&&1
    \end{array}
  \right). 
\]
Then it holds that $M_j^{-1}T_{jk}M_k=D_{jk}$. 
Thus the lemma is shown by considering a new system $M_jw_j$. 
\end{proof}

In what follows, we use the system $\{w_j\}$ as in Lemma \ref{lem:nice_ext} and use orthonormal frame $e_j:=dw_j|_{U_j}$ whenever we consider under the assumption of Theorem \ref{thm:main} $(ii)$. 

\subsection{Preliminary observation for constructing $\{F_{j, \alpha}\}_{\alpha}$}

In this subsection, we give a heuristic explanation of how to construct $\{F_{j, \alpha}\}_{\alpha}$. 
For this purpose, we compare the expansions of the function $(T_{jk}w_k)|_{V_{jk}}$ in two manners by assuming that the solution $u_j$ of the functional equation (\ref{eq:func_eq}) exists and satisfies $T_{jk}u_k=u_j$ on $V_{jk}$. 

The first expansion is obtained by using the functional equation  (\ref{eq:func_eq}) on $V_k$ as follows: 
\begin{eqnarray}
T_{jk}w_k&=& T_{jk}u_k+\sum_{|\alpha|\geq 2}T_{jk}F_{k, \alpha}(z_k)\cdot u_k^\alpha \nonumber \\
&=& u_j+\sum_{|\alpha|\geq 2}T_{jk}F_{k, \alpha}(z_k)\cdot \sum_{|\beta|=|\alpha|}\tau_{kj, \beta}^\alpha\cdot u_j^\beta\nonumber \\
&=& u_j+\sum_{|\alpha|\geq 2}T_{jk}\left(F_{k, \alpha}(z_k(z_j, 0))+\sum_{|\gamma|\geq 1}F_{kj, \alpha, \gamma}(z_j)\cdot w_j^\gamma\right) \cdot \sum_{|\beta|=|\alpha|}\tau_{kj, \beta}^\alpha\cdot u_j^\beta\nonumber \\
&=& u_j+\sum_{|\alpha|\geq 2}\sum_{|\beta|=|\alpha|}T_{jk}F_{k, \alpha}(z_k(z_j, 0))\cdot\tau_{kj, \beta}^\alpha\cdot u_j^\beta\nonumber \\
&&+\sum_{|\alpha|\geq 2}\sum_{|\gamma|\geq 1}\sum_{|\beta|=|\alpha|}T_{jk}F_{kj, \alpha, \gamma}(z_j)\cdot w_j^\gamma\cdot \tau_{kj, \beta}^\alpha\cdot u_j^\beta\nonumber, 
\end{eqnarray}
where $F_{kj, \alpha, \gamma}$'s are the coefficients of the expansion 
\[
F_{k, \alpha}(z_k(z_j, w_j))=F_{k, \alpha}(z_k(z_j, 0))+\sum_{|\gamma|\geq 1}F_{kj, \alpha, \gamma}(z_j)\cdot w_j^\gamma.  
\]
on $V_{jk}$. 
Denoting by 
\[
h_{1, jk, \alpha}(z_j)=
\left(
    \begin{array}{c}
      h_{1, jk, \alpha}^1(z_j) \\
      h_{1, jk, \alpha}^2(z_j) \\
	\vdots \\
	h_{1, jk, \alpha}^r(z_j)
    \end{array}
  \right)
\]
the coefficient of $u_j^\alpha$ in the expansion of the function 
\[
\sum_{|\alpha|\geq 2}\sum_{|\gamma|\geq 1}\sum_{|\beta|=|\alpha|}T_{jk}F_{kj, \alpha, \gamma}\cdot \tau_{kj, \beta}^\alpha\cdot u_j^\beta\cdot \prod_{\lambda=1}^r\left(u_j^\lambda+\sum_{|\delta|\geq 2}F_{j, \delta}^\lambda\cdot u_j^\delta\right)^{\gamma_\lambda}, 
\]
we obtain 
\[
T_{jk}w_k=u_j+\sum_{|\beta|\geq 2}\left(\sum_{|\alpha|=|\beta|}T_{jk}F_{k, \alpha}(z_k(z_j, 0))\cdot\tau_{kj, \beta}^\alpha+h_{1, jk, \beta}(z_j)\right)\cdot u_j^\beta. 
\]
The second expansion is obtained by using the expansion (\ref{eq:exp_of_Tw}) as follows: 
\begin{eqnarray}
T_{jk}w_k&=& w_j+\sum_{|\alpha|\geq 2}f_{kj, \alpha}(z_j)\cdot w_j^\alpha \nonumber \\
&=& \left(u_j+\sum_{|\alpha|\geq 2}F_{j, \alpha}(z_j)\cdot u_j^\alpha\right)+\sum_{|\alpha|\geq 2}f_{kj, \alpha}(z_j)\cdot\prod_{\lambda=1}^r\left(u_j^\lambda+\sum_{|\beta|\geq 2}F_{j, \beta}^\lambda(z_j)\cdot u_j^\beta\right)^{\alpha_\lambda}\nonumber \\
&=& u_j+\sum_{|\alpha|\geq 2}\left(F_{j, \alpha}(z_j)+h_{2, jk, \alpha}(z_j)\right)\cdot u_j^\alpha, \nonumber 
\end{eqnarray}
where we are denoting by 
\[
h_{2, jk, \alpha}(z_j)=
\left(
    \begin{array}{c}
      h_{2, jk, \alpha}^1(z_j) \\
      h_{2, jk, \alpha}^2(z_j) \\
	\vdots \\
	h_{2, jk, \alpha}^r(z_j)
    \end{array}
  \right)
\]
the coefficient of $u_j^\alpha$ in the expansion of the function 
\[
\sum_{2\leq |\gamma|<n}f_{kj, \gamma}\prod_{\lambda=1}^r\left(u_j^\lambda+\sum_{2\leq |\beta|<n}F_{j, \beta}^\lambda\cdot u_j^\beta\right)^{\gamma_\lambda}. 
\]
By comparing these two expansions, it is observed that the coefficient functions $\{F_{j, \beta}\}$ should be chosen so that the equation 
\[
F_{j, \beta}(z_j)-\sum_{|\alpha|=|\beta|}T_{jk}F_{k, \alpha}(z_k)\cdot\tau_{kj, \beta}^\alpha=h_{1, jk, \beta}(z_j)-h_{2, jk, \beta}(z_j)
\]
holds on $U_{jk}$ for each $\beta$. 
Note that this equation means that 
\[
\delta\left\{\left(U_j, \sum_{\lambda, |\beta|=n}F_{j, \beta}^\lambda\cdot e_{j, \lambda}\*\otimes e_j^\beta\right)\right\}
=\left\{\left(U_{jk}, \sum_{\lambda, |\alpha|=n} \left(h^\lambda_{1, jk, \alpha}-h^\lambda_{2, jk, \alpha}\right)\cdot e_{j, \lambda}^*\otimes e_j^\alpha\right)\right\} 
\]
holds in $\check{Z}^1(\{U_j\}, N_{Y/X}\otimes S^nN_{Y/X}^*)$ for each $n\geq 2$. 
Accordingly, we need $[\{(U_{jk}, H_{jk, n})\}]=0\in H^1(Y, N_{Y/X}\otimes S^nN_{Y/X}^*)$ for each $n\geq 2$, where 
\[
H_{jk, n}:=\sum_{\lambda=1}^r\sum_{|\alpha|=n}\left(h_{1, jk, \alpha}-h_{2, jk, \alpha}\right)\cdot e_{j, \lambda}^*\otimes e_j^\alpha, 
\]
which we will actually show in the next subsection by using the assumption ${\rm type}\,(Y, X)=\infty$. 

\subsection{Inductive construction of $F_{j, \alpha}$}\label{section:def_of_F_j_alpha}

Based on the observation in the previous subsection, we construct the coefficient functions $F_{j, \alpha}$. 
In the following inductive construction, the following properties of  $H_{jk, n}$ are essential: 
$H_{jk, 2}=-\textstyle\sum_{\lambda=1}^re_{j, \lambda}^*\otimes f_{kj, 2}^\lambda$ holds and $H_{jk, n}$ depends only on $\{F_{j, \alpha}\}_{|\alpha|<n}$ for each $n\geq 3$. 
These properties are easily shown by the definition of $H_{jk, n}$. 

\subsubsection*{Step $1$ (The construction of $\{F_{j, \alpha}\}_{|\alpha|=2}$)}

For each $\beta$ with $|\beta|=2$, we take $\{(U_j, F_{j, \beta})\}$ as a solution of the equation 
\[
F_{j, \beta}(z_j)-\sum_{|\alpha|=2}T_{jk}F_{k, \alpha}(z_k)\cdot\tau_{kj, \beta}^\alpha=-f_{kj, \beta}. 
\]
Note that there actually exists a solution of this equation, since $u_1(Y, X)=0$ holds. 
Strictly speaking, we choose appropriate solution $\{(U_j, F_{j, \alpha})\}$ of the above equation by using \cite[Lemma 3]{U} (=\cite[Lemma 2]{KS}) or \cite[Lemma 4]{U} as we will explain the details in \S \ref{section:norm_estim}. 

\begin{claim}\label{claim:step1}
Fix $\{F_{j, \beta}\}_{|\beta|=2}$ as above. Then the following holds: \\
$(a)$ For any choice of the remaining coefficient functions $\{F_{j, \alpha}\}_{|\alpha|>2}$, the solution $\{u_j\}$ of the functional equation (\ref{eq:func_eq}) is a system of type $2$ if exists. \\
$(b)$ Under the assumption of Theorem \ref{thm:main} $(ii)$, we can take $\{F_{j, \beta}\}_{|\beta|=2}$ with {\bf{\rm (Property)}$_{2}$}. 
\end{claim}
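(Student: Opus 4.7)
The plan for part (a) is to compute $T_{jk}u_k - u_j$ formally to order two in $u_j$ and verify that the coefficient of each $u_j^\beta$ with $|\beta|=2$ vanishes by virtue of the defining equation for $\{F_{j,\beta}\}_{|\beta|=2}$. Starting from
\[
T_{jk} u_k \;=\; T_{jk} w_k \;-\; T_{jk}\sum_{|\alpha|\ge 2}F_{k,\alpha}(z_k)\,u_k^\alpha
\]
and substituting the expansion (\ref{eq:exp_of_Tw}) for $T_{jk}w_k$ together with (\ref{eq:func_eq}) for $w_j$, I would use the leading-order identity $u_k = T_{kj}u_j + O(|u_j|^2)$ (which holds because $dw_j|_{U_j}=e_j$ and the functional equation has no linear correction) to rewrite $u_k^\alpha = \sum_{|\beta|=|\alpha|}\tau^\alpha_{kj,\beta}u_j^\beta + O(|u_j|^{|\alpha|+1})$ and $F_{k,\alpha}(z_k) = F_{k,\alpha}(z_k(z_j,0)) + O(|u_j|)$. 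Collecting the terms of exact degree two in $u_j$ then gives
\[
T_{jk}u_k - u_j \;=\; \sum_{|\beta|=2}\Bigl(F_{j,\beta}(z_j) + f_{kj,\beta}(z_j) - T_{jk}\sum_{|\alpha|=2}F_{k,\alpha}(z_k)\,\tau_{kj,\beta}^\alpha\Bigr)u_j^\beta \;+\; O(|u_j|^3),
\]
and the parenthesised coefficient vanishes precisely by the cohomological equation defining $\{F_{j,\beta}\}_{|\beta|=2}$. Hence any solution $\{u_j\}$ is automatically a system of type $2$.

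For part (b), the plan is to exploit the block-diagonal structure furnished by Lemma~\ref{lem:nice_ext}. Under the hypothesis of Theorem~\ref{thm:main}~(ii) the transition matrix takes the block form $T_{jk}=\mathrm{diag}(t_{jk}^1,\,S_{jk})$; consequently, expanding $e_j^\alpha$ in the frame $\{e_k^\beta\}$ when $\alpha_1=0$ only produces terms with $\beta_1=0$, so that $\tau^\alpha_{kj,\beta}=0$ whenever exactly one of $\alpha_1,\beta_1$ is zero. Moreover, since $w_j^1$ vanishes identically on $V_j\cap S$ by the choice made in \S\ref{section:initial_w_j_constr}, restricting the identity $T_{jk}w_k=w_j+\sum f_{kj,\alpha}(z_j)w_j^\alpha$ to $V_{jk}\cap S$ (where $w_j^1=w_k^1=0$) and reading off the first component shows $f_{kj,\alpha}^1\equiv 0$ on $U_{jk}$ for every $\alpha$ with $\alpha_1=0$. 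The $(\lambda=1,\,\alpha_1=0)$ part of the cohomological equation therefore becomes
\[
F_{j,\alpha}^1(z_j)-\sum_{|\beta|=2,\;\beta_1=0}t_{jk}^1 F_{k,\beta}^1(z_k)\,\tau^\beta_{kj,\alpha}\;=\;0,
\]
which admits the trivial solution $F_{j,\alpha}^1\equiv 0$. The remaining components (those with $\lambda\ge 2$, or with $\lambda=1$ and $\alpha_1\ge 1$) are then solved by the appropriate lemma to be invoked in \S\ref{section:norm_estim}, yielding $\{F_{j,\beta}\}_{|\beta|=2}$ satisfying $(\mathrm{Property})_2$.

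The only real difficulty I foresee lies in the bookkeeping for (b): one must verify that the direct-sum decomposition of $H^1(Y,N_{Y/X}\otimes S^2N_{Y/X}^*)$ induced by the splitting $N_{Y/X}=N_1\oplus N_{Y/S}$ is respected by the cohomological equation, so that prescribing a selected block of the solution to be zero does not conflict with the simultaneous solution of the remaining blocks. This follows automatically from the block-diagonality of $T_{jk}$ together with the corresponding block-structure of $\tau^\alpha_{kj,\beta}$: the equations decouple into independent blocks, and the hypothesis $u_1(Y,X)=0$ gives solvability on each of them, including the one where we impose the trivial solution.
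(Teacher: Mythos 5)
Your proposal is correct and follows essentially the same route as the paper: part $(a)$ is the degree-two case of the two-expansion comparison of $(T_{jk}w_k)|_{V_{jk}}$ that the paper carries out in general in the proof of Lemma \ref{lem:H} (and explicitly skips here), and part $(b)$ is exactly the paper's argument, namely $f^1_{kj,\alpha}\equiv 0$ for $\alpha_1=0$ because $t^1_{jk}w^1_k$ is divisible by $w^1_j$, combined with the block decomposition $S^2N^*_{Y/X}=\bigoplus_{\ell}\bigl(N_1^{-\ell}\otimes S^{2-\ell}N^*_{Y/S}\bigr)$ which decouples the cohomological equation and lets one take the $(\lambda=1,\ \alpha_1=0)$ block of the solution to be identically zero.
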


\begin{proof}
$(a)$ is shown by comparing the expansions of $(T_{jk}w_k)|_{V_{jk}}$ in two manners as in the previous section. 
We skip the details here since the computation is almost the same as (and much easier than) that in the proof of Lemma \ref{lem:H} below. 

Under the assumption of Theorem \ref{thm:main} $(ii)$, $f_{kj, \alpha}^1\equiv 0$ holds for each $\alpha$ with $\alpha_1=0$, since $t_{jk}^1w_k^1$ is divisible by $w_j^1$ (recall that we are using an initial system $\{w_j\}$ as in \S \ref{section:initial_w_j_constr} in this setting). 
Thus, by considering the decomposition $S^mN_{Y/X}^*=\textstyle\bigoplus_{\ell=0}^m\left(N_1^{-\ell}\otimes S^{m-\ell}N_{Y/S}^*\right)$, the defining equation of $\{F_{j, \alpha}^1\}_{|\alpha|=2, \alpha_1=0}$ can be rewritten by the equation 
\[
\delta\left(\left\{\left(U_j, \sum_{|\alpha|=2, \alpha_1=0}F_{j, \alpha}^1\cdot e_{j, 1}^*\otimes e_j^\alpha\right)\right\}\right)=0
\]
in $\check{Z}^1(\{U_j\}, N_1\otimes S^2N_{Y/S}^*)$, which proves the assertion $(b)$. 
\end{proof}

\subsubsection*{Step $(n-1)$ (The construction of $\{F_{j, \alpha}\}_{|\alpha|=n}$)}

After choosing $F_{j, \alpha}$ for each $\alpha$ with $|\alpha|<n$, we take $\{(U_j, F_{j, \beta})\}_{|\beta|=n}$ as a solution of the equation 
\[
F_{j, \beta}(z_j)-\sum_{|\alpha|=n}T_{jk}F_{k, \alpha}(z_k)\cdot\tau_{kj, \beta}^\alpha=h_{1, jk, \beta}-h_{2, jk, \beta}. 
\]
Here we use the fact that $h_{1, jk, \beta}$ and $h_{2, jk, \beta}$ depend only on $\{F_{j, \alpha}\}_{|\alpha|<n}$. 
The existence of a solution $\{(U_j, F_{j, \alpha})\}$ is assured by Lemma \ref{lem:H} $(a)$ below. 
Strictly speaking, we choose appropriate $\{(U_j, F_{j, \alpha})\}$ from the solutions by using \cite[Lemma 3]{U} (=\cite[Lemma 2]{KS}) or \cite[Lemma 4]{U} as we will explain the details in \S \ref{section:norm_estim}. 

\begin{lemma}\label{lem:H}
Let $\{F_{j, \alpha}\}_{|\alpha|\leq n-1}$ be as in Step $(n-2)$. Then the following holds: \\
$(a)$ $[\{(U_{jk}, H_{jk, n})\}]=0\in H^1(Y, N_{Y/X}\otimes S^nN_{Y/X}^*)$. \\
$(b)$ Let $\{F_{j, \alpha}\}_{|\alpha|=n}$ be as above. Then, for any choice of the remaining coefficient functions $\{F_{j, \alpha}\}_{|\alpha|>n}$, the solution $\{u_j\}$ of the functional equation (\ref{eq:func_eq}) is of type $n$ if exists. \\
$(c)$ Under the assumption of Theorem \ref{thm:main} $(ii)$, we can take $\{F_{j, \beta}\}_{|\beta|=n}$ with {\bf{\rm (Property)}$_{n}$}. 

\end{lemma}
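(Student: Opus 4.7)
The plan is to establish (b) first, derive (a) from it together with the infinite-type hypothesis, and finally address (c) using the splitting afforded by Lemma \ref{lem:nice_ext}.

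For (b), I would repeat the two-expansion comparison of $T_{jk}w_k$ from the preliminary observation subsection, interpreted at the level of formal power series in $u_j$ with $\{F_{j,\alpha}\}_{|\alpha|\leq n}$ satisfying the equations of Steps $1,\dots,n-1$ and $\{F_{j,\alpha}\}_{|\alpha|>n}$ arbitrary. The point, implicit in the derivation of those equations, is that the coefficient of $u_j^\beta$ in $T_{jk}u_k-u_j$ for $|\beta|\leq n$ depends only on $\{F_{j,\alpha}\}_{|\alpha|\leq|\beta|}$ and is killed exactly by the Step $(|\beta|-1)$ equation. Hence, if a solution $\{u_j\}$ of (\ref{eq:func_eq}) exists, $T_{jk}u_k-u_j=O(|u_j|^{n+1})$, so $\{u_j\}$ is of type $n$.

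For (a), introduce the partially-normalized system $\{\tilde{w}_j\}$ defined implicitly by $w_j=\tilde{w}_j+\sum_{2\leq|\alpha|\leq n-1}F_{j,\alpha}(z_j)\cdot\tilde{w}_j^\alpha$; the inverse function theorem produces such a holomorphic $\tilde{w}_j$ on a neighborhood of $U_j$ in $V_j$ with $d\tilde{w}_j|_{U_j}=e_j$. Applying the argument of (b) to this truncated setup (with $n$ replaced by $n-1$ and the higher-order $F$'s set to zero) shows that $\{\tilde{w}_j\}$ is a system of type $n-1$, and an expansion of $T_{jk}\tilde{w}_k-\tilde{w}_j$ through order $n$ identifies the coefficient of $\tilde{w}_j^\beta$ (for $|\beta|=n$) componentwise with $-H_{jk,\beta}$, so that $[\{(U_{jk},H_{jk,n})\}]=-u_{n-1}(Y,X;\{\tilde{w}_j\})$ in $H^1(Y, N_{Y/X}\otimes S^nN_{Y/X}^*)$ by Lemma \ref{lem:1-cocycle}. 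This class vanishes by the infinite-type hypothesis and Proposition \ref{prop:welldef_3}.

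For (c), the setting of Theorem \ref{thm:main} (ii) provides, via Lemma \ref{lem:nice_ext}, a block-diagonal splitting of $T_{jk}$, inducing the decomposition $N_{Y/X}\otimes S^nN_{Y/X}^*=\bigoplus_{\ell=0}^n N_{Y/X}\otimes N_1^{-\ell}\otimes S^{n-\ell}N_{Y/S}^*$ and, correspondingly, decoupling the defining equations for the coefficients $F_{j,\alpha}^1$ ($\alpha_1=0$) from the rest. Inductively assuming (Property)$_m$ for $2\leq m<n$, every $F_{j,\alpha}^1$ with $\alpha_1=0$ and $|\alpha|<n$ vanishes, and by the construction in \S \ref{section:initial_w_j_constr} the initial coefficients $f_{kj,\alpha}^1$ also vanish whenever $\alpha_1=0$. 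An inspection of the defining formulas for $h_{1,jk,\beta}^1$ and $h_{2,jk,\beta}^1$ (for $\beta$ with $\beta_1=0$) shows that each contributing monomial must involve a factor $f_{kj,\alpha}^1$ or $F_{j,\alpha}^1$ with $\alpha_1=0$, and hence vanishes by the inductive hypothesis. Therefore the first-component equation reduces to one in $\check{Z}^1(\{U_j\}, N_1\otimes S^nN_{Y/S}^*)$ with zero right-hand side, and the trivial solution $F_{j,\alpha}^1\equiv0$ (for $\alpha_1=0$) is admissible, yielding (Property)$_n$.

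The main obstacle is the order-$n$ bookkeeping in part (a): tracking precisely how the partial inversion $w_j\mapsto\tilde{w}_j$ interacts with the nonlinear substitutions in the two expansions, so as to identify the leading coefficient of $T_{jk}\tilde{w}_k-\tilde{w}_j$ with $-H_{jk,n}$ without spurious lower- or higher-order contributions. The combinatorial check in (c) is similar in spirit to Claim \ref{claim:step1} (b) but requires careful accounting of which monomials in the products $\prod_\lambda(\cdot)^{\gamma_\lambda}$ can yield a nonvanishing first-component contribution.
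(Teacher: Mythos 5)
Your proposal is correct and follows essentially the same route as the paper: both rest on the two-expansion comparison of $T_{jk}w_k$ (using the inductive type-$(n-1)$ statement to control the substitution $u_k^\alpha=\sum_\beta\tau^\alpha_{kj,\beta}u_j^\beta+O(|u_j|^{n+1})$), identify $[\{H_{jk,n}\}]$ with $-u_{n-1}$ of the system obtained by truncating the functional equation at order $n-1$ (your $\{\tilde w_j\}$ is exactly the paper's specialization $F_{j,\alpha}\equiv 0$ for $|\alpha|\geq n$), and prove (c) by the same vanishing argument for $h^1_{1,jk,\alpha}$, $h^1_{2,jk,\alpha}$ when $\alpha_1=0$. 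The only difference is the order in which (a) and (b) are extracted from the key identity, which is immaterial.
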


\begin{proof}
Fix $\{F_{j, \alpha}\}_{|\alpha|\geq n}$ and consider the solution $\{u_j\}$ of the functional equation (\ref{eq:func_eq}). 
By Lemma \ref{lem:H} $(b)$ for Step $(n-2)$, it turns out that $\{u_j\}$ is of type $n-1$ and thus 
\[
u_k^\alpha=\prod_{\lambda=1}^r\left(\sum_{\mu=1}^r(T_{kj})^\lambda_\mu u_j^\mu+O(|u_j|^n)\right)^{\alpha_\lambda}
=\sum_{|\beta|=|\alpha|}\tau_{kj, \beta}^\alpha\cdot u_j^\beta +O(|u_j|^{n+1})
\]
holds for each $\alpha$ with $|\alpha|\geq 2$. 
Consider the expansions 
\begin{eqnarray}
T_{jk}w_k&=& T_{jk}u_k+\sum_{|\alpha|\geq 2}T_{jk}F_{k, \alpha}(z_k)\cdot u_k^\alpha \nonumber \\
&=& T_{jk}u_k+\sum_{|\alpha|\geq 2}T_{jk}F_{k, \alpha}(z_k)\cdot \sum_{|\beta|=|\alpha|}\tau_{kj, \beta}^\alpha\cdot u_j^\beta+O(|u_j|^{n+1})\nonumber \\
&=& T_{jk}u_k+\sum_{2\leq|\beta|\leq n}\left(\sum_{|\alpha|=|\beta|}T_{jk}F_{k, \alpha}(z_k(z_j, 0))\cdot \tau_{kj, \beta}^\alpha+h_{1, jk, \beta}(z_j)\right)u_j^\beta+O(|u_j|^{n+1})\nonumber 
\end{eqnarray}
and 
\begin{eqnarray}
T_{jk}w_k
&=& u_j+\sum_{2\leq |\alpha|\leq n}\left(F_{j, \alpha}(z_j)+h_{2, jk, \alpha}(z_j)\right)\cdot u_j^\alpha+O(|u_j|^{n+1}). \nonumber 
\end{eqnarray}
By comparing these, we obtain 
\begin{eqnarray}\label{eq:exp_uchidome}
&&T_{jk}u_k-u_j \\
&=&\sum_{|\beta|=n}\left(F_{j, \beta}(z_j)-\sum_{|\alpha|=n}T_{jk}F_{k, \alpha}(z_k(z_j, 0))\cdot\tau_{kj, \beta}^\alpha-h_{1, jk, \beta}(z_j)+h_{2, jk, \beta}(z_j)\right)\cdot u_j^\beta\nonumber \\
&&+O(|u_j|^{n+1}). \nonumber
\end{eqnarray}
By considering this equation (\ref{eq:exp_uchidome}) in the case where $F_{j, \alpha}\equiv 0$ for each $\alpha$ with $|\alpha|\geq n$, we obtain $u_{n-1}(Y, X; \{u_j\})=[\{(U_{jk}, -H_{jk, n})\}]$. Thus the assertion $(a)$ follows from the assumption $u_{n-1}(Y, X)=0$. 
The assertion $(b)$ also follows directly from the equation (\ref{eq:exp_uchidome}). 

Under the assumption of Theorem \ref{thm:main} $(ii)$, it follows from the argument as in the proof of Claim \ref{claim:step1} that the defining equation of $\{F_{j, \alpha}^1\}_{|\alpha|=n, \alpha_1=0}$ can be rewritten by the equation 
\[
\delta\left(\left\{\left(U_j, \sum_{|\alpha|=n, \alpha_1=0}F_{j, \alpha}^1\cdot e_{j, 1}^*\otimes e_j^\alpha\right)\right\}\right)=\left\{\left(U_{jk}, \sum_{|\alpha|=n, \alpha_1=0} (h_{1, jk, \alpha}^1-h_{2, jk, \alpha}^1)\cdot e_{j, 1}^*\otimes e_j^\alpha\right)\right\}
\]
in $\check{Z}^1(\{U_j\}, N_1\otimes S^nN_{Y/S}^*)$. 
Thus it is sufficient for proving the assertion $(c)$ to show $h_{1, jk, \alpha}^1\equiv 0$ and $h_{2, jk, \alpha}^1\equiv 0$ for each $\alpha$ with $|\alpha|=n$ and $\alpha_1=0$, 
which can be easily checked from Lemma \ref{lem:H} $(c)$ in Step $(\nu)$ for each $\nu\leq n-2$. 
\end{proof}

\subsection{Norm estimate for $F_{j, \alpha}$ in a special setting}\label{section:norm_estim}

In this subsection, we estimate the norm of $F_{j, \alpha}$ in order to show the convergence of the functional equation (\ref{eq:func_eq}). 
Here we treat a special case where $(Y, X)$ is as in Remark \ref{rmk:line_bdl_r_setting}: i.e. $N_{Y/X}$  admits a direct decomposition $N_{Y/X}=N_1\oplus N_2\oplus\cdots\oplus N_r$. 
Note that, in this case, the defining equation of each $\{F_{j, \alpha}\}$ is rewritten by the equation 
\begin{equation}\label{eq:eq_for_def_F_j_alpha_in_special_cases}
\delta(\{(U_j, F_{j, \alpha}^\lambda)\})=\{(U_{jk}, h_{1, jk, \alpha}^\lambda-h_{2, jk, \alpha}^\lambda)\}
\end{equation}
in $\check{Z}^1(Y, N_\lambda\otimes N_\alpha^{-\alpha})$. 
We additionally assume that $N_{Y/X}$ is the holomorphically trivial vector bundle $\mathbb{I}^{(r)}_Y$ of rank $r$ or $N_{Y/X}\in \mathcal{S}^{(r)}(Y):=\textstyle\bigcup_{A>0}\mathcal{S}^{(r)}_A(Y)$. 

\subsubsection{The case where $N_{Y/X}\cong \mathbb{I}^{(r)}_Y$}\label{section:normestim_triv_case}

Here we consider the case where $N_{Y/X}\cong \mathbb{I}^{(r)}_Y$. 
Note that, in this case, $N_\lambda\otimes N_\alpha^{-\alpha}\cong\mathbb{I}^{(1)}_Y$ holds for any $\lambda$ and $\alpha$. 
Fix $U_j^*\Subset U_j$ with $\textstyle\bigcup_jU_j^*= Y$. 
Take a constant $K:=K(\mathbb{I}^{(1)}_Y)$ as in \cite[Lemma 3]{U} (=\cite[Lemma 2]{KS}): 
i.e. for any $1$-cocycle $a=\{(U_{jk}, a_{jk})\}\in \check{Z}^1(\{U_j\}, \mathcal{O}_Y)$ with $\|a\|:=\textstyle\max_{j, k}\textstyle\sup_{U_{jk}}|a_{jk}|<\infty$ which is cohomologous to zero, there exists a $0$-cochain $b=\{(U_j, b_j)\}\in \check{C}^0(\{U_j\}, \mathcal{O}_Y)$ such that $a$ is the coboundary of $b$ and that $\|b\|:=\textstyle\max_{j}\textstyle\sup_{U_{j}}|b_j|\leq K\|a\|$. 
Take also a positive number $M$ larger than $\textstyle\max_j\textstyle\max_{\lambda}\textstyle\sup_{V_j}|w_j^\lambda|$ and $\textstyle\max_{jk}\textstyle\max_{\lambda}\textstyle\sup_{V_{jk}}|w_{k}^\lambda|$, and a sufficiently large positive number $R$ so that $\{(z_j, w_j)\mid z_j\in U_j\cap U_k^*, |w_j|<R^{-1}\}\subset V_k$ holds for each $j, k$. 
By using these constants, let us consider the formal series $A(X)=A(X^1, X^2, \dots, X^r)=\textstyle\sum_{|\alpha|\geq 2}A_\alpha X^\alpha$ defined by 
\begin{eqnarray}\label{eq:def_of_A}
&&A(X) \\
&=&2KA(X)\cdot\left(-1+\prod_{\lambda=1}^r\frac{1}{1-R(X^\lambda+A(X))}\right) \nonumber\\
&&+2KM\cdot\left(-1-R(X^1+X^2+\cdots+X^r+rA(X))+\prod_{\lambda=1}^r\frac{1}{1-R(X^\lambda+A(X))}\right). \nonumber
\end{eqnarray}
Note that, by the inductive argument on $|\alpha|$, it is easy to see that each coefficient $A_\alpha$ is determined uniquely and is a positive real number. 

\begin{lemma}
The formal series $A(X)$ has a positive radius of convergence. 
\end{lemma}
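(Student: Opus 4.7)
The plan is to apply the analytic implicit function theorem to (\ref{eq:def_of_A}). Define
\[
G(X, A) := 2KA\cdot\left(-1+\prod_{\lambda=1}^r\frac{1}{1-R(X^\lambda+A)}\right)
+ 2KM\cdot\left(-1-R(X^1+\cdots+X^r+rA)+\prod_{\lambda=1}^r\frac{1}{1-R(X^\lambda+A)}\right),
\]
so that (\ref{eq:def_of_A}) reads $F(X, A) := A - G(X, A) = 0$, and $F$ is holomorphic in a neighborhood of the origin of $\mathbb{C}^{r+1}$ because each factor $(1-R(X^\lambda + A))^{-1}$ is.

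The first step is to check $F(0, 0) = 0$ and $(\partial F/\partial A)(0, 0) \neq 0$. Expanding the product to first order,
\[
\prod_{\lambda=1}^r\frac{1}{1-R(X^\lambda + A)} = 1 + R(X^1 + \cdots + X^r + rA) + O(|(X, A)|^2),
\]
one sees that $-1 + \prod = O(|(X,A)|)$; multiplying by the prefactor $A$ makes the first summand of $G$ of order $O(|(X,A)|^2)$, and the linear part of the second summand is cancelled exactly by the explicit subtraction $-R(X^1+\cdots+X^r+rA)$. Thus $G(X, A) = O(|(X,A)|^2)$, so $(\partial F/\partial A)(0, 0) = 1$, and the implicit function theorem yields a unique holomorphic germ $A_*(X)$ with $A_*(0) = 0$ satisfying $F(X, A_*(X)) \equiv 0$ on some polydisc around $0$.

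The second step is to identify $A_*(X)$ with the formal series $A(X) = \sum_{|\alpha|\geq 2} A_\alpha X^\alpha$ of the lemma. Write $G(X, A) = b(X) + a_1(X) A + a_2(X) A^2 + \cdots$; the calculation above gives $b(X) = G(X, 0) = O(|X|^2)$ and $a_1(X) = (\partial G/\partial A)(X, 0) = O(|X|)$, the latter because the $A$-linear terms of $G$ have no constant contribution at $X = 0$. Consequently, for any formal series $A(X) = \sum_{|\alpha|\geq 2} A_\alpha X^\alpha$, each occurrence of a coefficient $A_\alpha$ in the $X$-expansion of $G(X, A(X))$ is accompanied by at least one further factor of the form $X^\mu$ or $A_\beta$; hence the coefficient of $X^\gamma$ on the right hand side of $A = G(X, A)$ depends only on the $A_\beta$ with $|\beta| < |\gamma|$. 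Therefore the $A_\gamma$ are determined uniquely and inductively by (\ref{eq:def_of_A}), and the Taylor coefficients of $A_*$ (which also start at order $2$ because $G = O(|(X,A)|^2)$) must satisfy the same recursion; hence $A_* = A$ as formal power series, and $A(X)$ converges on the polydisc of convergence of $A_*$.

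The only real subtlety is the cancellation that yields $G(X, A) = O(|(X,A)|^2)$, arranged precisely by the two explicit correction terms inside the brackets of (\ref{eq:def_of_A}); once this is recorded, the remainder is a routine application of the implicit function theorem together with bookkeeping on power-series coefficients.
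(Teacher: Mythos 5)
Your proof is correct and follows essentially the same route as the paper: both apply the analytic implicit function theorem to the defining relation (\ref{eq:def_of_A}) after checking that the linear term in $A$ is nondegenerate, and then identify the resulting holomorphic germ with the formal series via uniqueness of the coefficient recursion. The only cosmetic difference is that the paper first multiplies through by $Q(X,Y)=\prod_{\lambda=1}^r(1-R(X^\lambda+Y))$ to work with a polynomial $P(X,Y)$ satisfying $P(0,Y)=-Y+O(Y^2)$, whereas you keep the rational form and verify $\partial F/\partial A(0,0)=1$ directly.
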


\begin{proof}
Let us consider 
\begin{eqnarray}
P(X, Y)&:=&-Q(X, Y)\cdot Y+2KY\cdot\left(-Q(X, Y)+1\right)\nonumber \\
&&+2KM\cdot\left(Q(X, Y)\cdot\left(-1-R(X^1+X^2+\cdots+X^r+rY)\right)+1\right),  \nonumber 
\end{eqnarray}
where $Q(X, Y):=\textstyle\prod_{\lambda=1}^r(1-R(X^\lambda+Y))\in\mathbb{C}[X, Y]$. 
As $P(0, Y)=-Y+O(Y^2)$, we can apply the implicit function theorem to obtain a holomorphic function $a(X)$ defined on a neighborhood of the origin of $\mathbb{C}^r$ with $a(0)=0$ and $P(X, a(X))\equiv 0$. 
This means that $a(X)$ satisfies the equation (\ref{eq:def_of_A}) and thus we obtain $a(X)=A(X)$ on a neighborhood of the origin, which proves the lemma. 
\end{proof}

In what follows, we will show that one can choose the coefficient functions $\{F_{j, \alpha}\}$ as in the previous section so that $A(X)$ is a dominating series of the function equation (\ref{eq:func_eq}). 

First, we will show the existence of the solution $\{(U_j, F_{j, \alpha})\}_{|\alpha|=2}$ of the equation (\ref{eq:eq_for_def_F_j_alpha_in_special_cases}) with $\textstyle\max_\lambda\|\{(U_j, F_{j, \alpha}^\lambda)\}\|\leq A_\alpha$ for each $\lambda=1, 2, \dots, r$ and $\alpha$ with $|\alpha|=2$ 
($\|\{(U_j, F_{j, \alpha}^\lambda)\}\|:=\textstyle\max_j\textstyle\sup_{U_j}|F_{j, \alpha}^\lambda|$). 
Note that $A_\alpha=2KMR^2$ holds for each $\alpha$ with $|\alpha|=2$. 
By Cauchy estimate, we obtain the inequality $\textstyle\max_j\textstyle\sup_{U_j\cap U_k^*}|f_{kj, \alpha}^\lambda|\leq MR^2$. Combining this inequality and the argument as in \cite[p. 599]{U}, we obtain $\|\{(U_{jk}, f_{kj, \alpha}^\lambda)\}\|\leq 2MR^2$ 
($\|\{(U_j, f_{kj, \alpha}^\lambda)\}\|:=\textstyle\max_{j, k}\textstyle\sup_{U_{jk}}|f_{kj, \alpha}^\lambda|$). 
Thus we obtain $\|\{(U_j, F_{j, \alpha}^\lambda)\}\|\leq A_\alpha$ from the definition of the constant $K$. 

Next, we will show the existence of the solution $\{(U_j, F_{j, \alpha})\}_{|\alpha|=n}$ of the equation (\ref{eq:eq_for_def_F_j_alpha_in_special_cases}) with $\textstyle\max_\lambda\|\{(U_j, F_{j, \alpha}^\lambda)\}\|\leq A_\alpha$ for each $\lambda=1, 2, \dots, r$ and $\alpha$ with $|\alpha|=n$ by assuming the assertion for $|\alpha|<n$. 
Take $\alpha$ with $|\alpha|=n$. 
Then, it follows from the inductive assumption that $\textstyle\max_{j, k}\textstyle\sup_{U_j\cap U_k^*}|h_{1, jk, \alpha}^\lambda|$ is bounded by the coefficient of $X^\alpha$ in the expansion of 
\[
\sum_{2\leq |\gamma|< n}\sum_{|\beta|\geq 1}\max_{j, k}\sup_{U_j\cap U_k^*}\left|F_{kj, \gamma, \beta}^\lambda\right|\cdot X^\gamma\cdot 
\prod_{\lambda=1}^r\left(X^\lambda+A(X)\right)^{\beta_\lambda}. 
\]
By Cauchy estimate, it is bounded by the coefficient of $X^\alpha$ in the expansion of 
\[
\sum_{2\leq |\gamma|< n}\sum_{|\beta|\geq 1}A_\gamma R^{|\beta|}\cdot X^\gamma\cdot 
\prod_{\lambda=1}^r\left(X^\lambda+A(X)\right)^{\beta_\lambda}
=A(X)\cdot\left(-1+\prod_{\lambda=1}^r\frac{1}{1-R(X^\lambda+A(X))}\right). 
\]
From a similar argument, it can be seen that $\textstyle\max_{j, k}\textstyle\sup_{U_j\cap U_k^*}|h_{2, jk, \alpha}^\lambda|$ is bounded by the coefficient of $X^\alpha$ in the expansion of 
\[
M\cdot\left(-1-R(X^1+X^2+\cdots+X^r+rA(X))+\prod_{\lambda=1}^r\frac{1}{1-R(X^\lambda+A(X))}\right). 
\]
Thus, by the argument as in \cite[p. 599]{U} and the defining function (\ref{eq:def_of_A}) of $A(X)$, we obtain the inequality $\|\{(U_{jk}, h_{1, jk, \alpha}^\lambda-h_{2, jk, \alpha}^\lambda)\}\|\leq K^{-1}A_\alpha$ for each $\lambda$. 
Therefore the assertion follows from the definition of the constant $K$. 

\subsubsection{The case where $N_{Y/X}\in\mathcal{S}^{(r)}(Y)$}\label{section:normestim_E_1_case}

When $N_{Y/X}\in\mathcal{S}^{(r)}(Y)$, by using \cite[Lemma 4]{U} instead of \cite[Lemma 3]{U}, the same arguments as in \S \ref{section:normestim_triv_case} can be carried out after replacing 
the defining function (\ref{eq:def_of_A}) of $A(X)$ with 
\begin{eqnarray}
&&\sum_{|\alpha|\geq 2}\varepsilon_{|\alpha|-1}^{-1}A_\alpha X^\alpha\nonumber \\
&=&2A(X)\cdot\left(-1+\prod_{\lambda=1}^r\frac{1}{1-R(X^\lambda+A(X))}\right)\nonumber \\
&&+2M\cdot\left(-1-R(X^1+X^2+\cdots+X^r+rA(X))+\prod_{\lambda=1}^r\frac{1}{1-R(X^\lambda+A(X))}\right),  \nonumber
\end{eqnarray}
where 
\[
\varepsilon_n^{-1}:=\frac{1}{K}\min_{\alpha\in\mathbb{Z}^r,\ |\alpha|=n}d(\mathbb{I}_Y^{(1)}, N_\alpha) 
\]
($K$ is the constant as in \cite[Lemma 4]{U}, see also \cite[\S 4.6]{U} for the details). 
Thus, for proving the convergence of the functional equation (\ref{eq:func_eq}), it is sufficient to see the convergence of the formal series $A(X)$ with the above new defining equation. 

Consider $B(Y):=Y+A(Y, Y, \dots, Y)=Y+\textstyle\sum_{n=2}^\infty B_nY^n$, where $B_n=\textstyle\sum_{|\alpha|=n}A_\alpha$ for each $n\geq 2$. 
As it can be easily seen that $A_\alpha\geq 0$ holds for each $\alpha$, we have $A_\alpha\leq B_{|\alpha|}$. 
Therefore, for showing the convergence of $A(X)$, it is sufficient to show that $B(Y)$ has a positive radius of convergence. 
By considering $X^1=X^2=\dots=X^r=Y$, we obtain the defining function of $B(Y)$ as follows: 
\begin{eqnarray}
&&\sum_{n=2}^\infty\varepsilon_{n-1}^{-1}B_n Y^n\nonumber \\
&=&2(B(Y)-Y)\cdot\left(-1+\frac{1}{(1-RB(Y))^r}\right)+2M\cdot\left(-1-rRB(Y)+\frac{1}{(1-RB(Y))^r}\right).  \nonumber
\end{eqnarray}
Also consider another formal series $\widehat{B}(Y)=Y+\textstyle\sum_{n=2}^\infty \widehat{B}_nY^n$ defined by 
\begin{eqnarray}
&&\sum_{n=2}^\infty\varepsilon_{n-1}^{-1}\widehat{B}_n Y^n\nonumber \\
&=&2\widehat{B}(Y)\cdot\left(-1+\frac{1}{(1-R\widehat{B}(Y))^r}\right)+2M\cdot\left(-1-rR\widehat{B}(Y)+\frac{1}{(1-R\widehat{B}(Y))^r}\right). \nonumber
\end{eqnarray}
As it clearly holds that $\widehat{B}_n\geq B_n$ for each $n\geq 2$, it is sufficient to show that $\widehat{B}(Y)$ has a positive radius of convergence. 
According to Siegel's argument (\cite{S}, see also \cite[Lemma 5]{U}), it is sufficient to see the following two properties of $\{\varepsilon_n\}$: 
$(a)$ There exists a positive number $A$ such that $\varepsilon_n<(2n)^A$ for any $n\geq 1$, and 
$(b)$ $\varepsilon^{-1}_{n-m}\leq \varepsilon^{-1}_n+\varepsilon^{-1}_m$ for any $n>m$. 
The property $(a)$ directly follows from the assumption that $N_{Y/X}\in \mathcal{S}^{(r)}(Y)$. 
The property $(b)$ can be shown by
\[
\varepsilon^{-1}_n+\varepsilon^{-1}_m=\frac{1}{K}\left(d(\mathbb{I}_Y^{(1)}, N_{\alpha^{(n)}})+d(\mathbb{I}_Y^{(1)}, N_{\alpha^{(m)}})\right)
\geq\frac{1}{K}d(N_{\alpha^{(n)}}, N_{\alpha^{(m)}})=\frac{1}{K}d(\mathbb{I}_Y^{(1)}, N_{\alpha^{(n)}-\alpha^{(m)}}), 
\]
where $\alpha^{(n)}\in \mathbb{Z}^r$ is an element which attains the minimum in the definition of $\varepsilon^{-1}_n$. 

\subsection{End of the proof for the special setting}\label{section:end_of_the_proof_for_sp}
Assume that $N_{Y/X}$ is holomorphically trivial or $N_{Y/X}\in \mathcal{S}^{(r)}(Y)$ holds. 
Then, by choosing the coefficient functions $\{F_{j, \alpha}\}$ as above, we can deduce from the inverse function theorem that there exists a solution $\{u_j\}$ of the functional equation (\ref{eq:func_eq}). 
By shrinking $V$ if necessary, we may assume that $u_j$ is defined on $V_j$ for each $j$. 
From Lemma \ref{lem:H} $(b)$, it holds that the solutions satisfy $u_j=T_{jk}u_k$ on each $V_{jk}$. 
Thus the theorem for this special case follows from the arguments as we already explained in \S \ref{section:outline_of_proof}. 

\subsection{Proof for the general setting}

When $N_{Y/X}\in\mathcal{E}_0^{(r)}(Y)$, consider $G:={\rm ker}\,\rho$, where $\rho=\rho_{N_{Y/X}}$ is the unitary representation corresponding to $N_{Y/X}$.  
Fix a tubular neighborhood $V$ of $Y$ in $X$ and regard $G$ as a normal subgroup of $\pi_1(V, *)$ by the natural isomorphism $\pi_1(Y, *)\cong\pi_1(V, *)$. 
From the assumption, there exists a finite normal covering $\pi\colon\widetilde{V}\to V$ corresponding to $G\subset \pi_1(V, *)$. 
Denote by  $\widetilde{Y}$ the preimage $\pi^{-1}(Y)$. 
Then it is clear from the construction that 
$N_{\widetilde{Y}/\widetilde{V}}=(\pi|_{\widetilde{Y}})^*N_{Y/X}$ is holomorphically trivial. 
Let us denote $\pi^{-1}(V_j)$ by $\widetilde{V}_j$ and $\widetilde{V}_j\cap \widetilde{Y}$ by $\widetilde{U}_j$. 
We may assume that $\widetilde{U}_j$ is the union of $d$ copies of $U_j$, where $d$ is the degree of the map $\pi$. 
Consider the local defining functions system $\{\widetilde{w}_j\}$ defined by $\widetilde{w}_j:=(\pi|_{\widetilde{V}_j})^*w_j$. 

By Lemma \ref{lem:well_def_sub} below, $(\widetilde{Y}, \widetilde{V})$ is of infinite type. 
Thus, from the result we showed in \S \ref{section:end_of_the_proof_for_sp}, 
we can solve the functional equation (\ref{eq:func_eq}) with initial system $\{\widetilde{w}_j\}$ on each $\widetilde{V}_j$ to obtain a local defining functions system $\{\widetilde{u}_j\}$ of $\widetilde{Y}$ in $\widetilde{V}$ with $\widetilde{u}_j=T_{jk}\widetilde{u}_k$ on each $\widetilde{V}_{jk}$. 
Note that, as $\widetilde{w}_j=\widetilde{u}_j+O(|\widetilde{u}_j|^2)$, it holds that $d\widetilde{u}_j|_{\widetilde{U}_j}=(\pi|_{\widetilde{U}_j})^*e_j$. 

Define a function $u_j$ on $V_j$ by 
\[
(\pi|_{\widetilde{V}_j})^*u_j=\frac{1}{d}\sum_{\nu=1}^di_\nu^*\widetilde{u}_j, 
\]
where $\{i_1, i_2, \dots, i_d\}$ is the set of deck transformations of $\pi$. 
Clearly it holds that $du_j|_{U_j}=e_j$ and $\{u_j= 0\}=U_j$ hold, which means that $\{u_j\}$ is a local defining functions system of $Y$. 
It is also easy to see that $u_j=T_{jk}u_k$ holds on each $V_{jk}$, which shows the assertion $(i)$ for the case where $N_{Y/X}\in\mathcal{E}^{(r)}_0(Y)$. 
Under the assumption in Theorem \ref{thm:main} $(ii)$, 
it follows from {\bf (Property)}$_n$'s that we may assume that $\widetilde{u}_j^1$ is a defining function of $\widetilde{V}_j\cap \pi^{-1}(S)$ in $\widetilde{V}_j$ with $d\widetilde{u}_j^1=\pi^*((1+O(|v_j|))\cdot dw_j^1)$ on each $\widetilde{V}_j\cap\pi^{-1}(S)$. 
Therefore $u_j^1$ is a defining function of $V_j\cap S$ by shrinking $V$ if necessary, which proves the assertion $(ii)$ for the case where $N_{Y/X}\in\mathcal{E}^{(r)}_0(Y)$. 

When $N_{Y/X}\in\mathcal{E}_1^{(r)}(Y)$, there exists a finite normal covering $\pi\colon\widetilde{Y}\to Y$ such that $(\pi|_{\widetilde{Y}})^*N_{Y/X}\in \mathcal{S}^{(r)}(\widetilde{Y})$. 
The theorem for this case is shown by the same argument as above by using this map $\pi$. 

\begin{lemma}\label{lem:well_def_sub}
Let $\pi, \widetilde{Y}, \widetilde{V}$ be as above. Then  
${\rm type}\,(Y, X)={\rm type}\,(\widetilde{Y}, \widetilde{V})$ holds. 
\end{lemma}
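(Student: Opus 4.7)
The plan is to reduce the equality to the injectivity of the pullback map on $\check H^1$ induced by the finite normal covering $\pi$, together with the observation that pulling back a defining-function system is compatible with the construction of the obstruction classes in \S \ref{section:def_of_obstr_class}. Throughout I would refine the cover $\{U_j\}$ of $Y$ to the cover $\{\widetilde U_j^{(\nu)}\}_{j,\,\nu=1,\dots,d}$ of $\widetilde Y$, where $\widetilde U_j^{(\nu)}$ are the connected components of $\pi^{-1}(U_j)$; each $\widetilde U_j^{(\nu)}$ is biholomorphic to $U_j$ via $\pi$ and hence is again simply connected and Stein, so it fits the framework of \S \ref{section:notation}. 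The deck group $G$ of order $d$ acts on this refined cover by permuting the superscripts.

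First I would check that pullback preserves the type. If $\{(V_j,w_j)\}$ is a system of type $n$ with $dw_j|_{U_j}=e_j$, then $\widetilde w_j^{(\nu)}:=(\pi|_{\widetilde V_j^{(\nu)}})^*w_j$ satisfies
\[
\bigl((\pi|_{\widetilde U_{jk}^{(\nu,\mu)}})^*T_{jk}\bigr)\,\widetilde w_k^{(\mu)}=\widetilde w_j^{(\nu)}+\sum_{|\alpha|\geq n+1}(\pi|_{\widetilde U_j^{(\nu)}})^*f_{kj,\alpha}(z_j)\cdot(\widetilde w_j^{(\nu)})^\alpha,
\]
so it is a system of type $n$, and the cocycle representing $u_n(\widetilde Y,\widetilde V;\{\widetilde w_j^{(\nu)}\})$ is exactly the image of the cocycle representing $u_n(Y,X;\{w_j\})$ under the natural pullback map
\[
\pi^*\colon H^1(Y,N_{Y/X}\otimes S^{n+1}N_{Y/X}^*)\longrightarrow H^1(\widetilde Y,N_{\widetilde Y/\widetilde V}\otimes S^{n+1}N_{\widetilde Y/\widetilde V}^*).
\]

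Next I would establish that $\pi^*$ is injective. Given a cocycle $c\in\check Z^1(\{U_j\},\mathcal F)$ (with $\mathcal F=N_{Y/X}\otimes S^{n+1}N_{Y/X}^*$), the pullback $\pi^*c$ is $G$-invariant by construction, and if $\pi^*c=\delta b$ for some $b\in\check C^0(\{\widetilde U_j^{(\nu)}\},\pi^*\mathcal F)$, then the average $\bar b:=\tfrac1d\sum_{g\in G}g^*b$ is a $G$-invariant $0$-cochain that still satisfies $\delta\bar b=\pi^*c$, hence descends to a $0$-cochain $b_0$ on $\{U_j\}$ with $\delta b_0=c$. Thus $[c]=0$, as required. (This is the standard trace/averaging argument, using only that $\pi$ is a finite normal étale covering and we work in characteristic $0$.)

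Finally I would conclude both inequalities. Any system of type $n$ on $V$ pulls back to one on $\widetilde V$, giving $\mathrm{type}\,(Y,X)\leq \mathrm{type}\,(\widetilde Y,\widetilde V)$. Conversely, suppose $\mathrm{type}\,(\widetilde Y,\widetilde V)\geq n$, and inductively assume a system $\{(V_j,w_j)\}$ of type $\nu<n$ with $dw_j|_{U_j}=e_j$ has been constructed on $V$. Its pullback is of type $\nu$, and by assumption the corresponding obstruction vanishes on $\widetilde V$; by the compatibility above and the injectivity of $\pi^*$, it follows that $u_\nu(Y,X;\{w_j\})=0$, so Lemma \ref{lem:shougai_1} upgrades $\{w_j\}$ to a system of type $\nu+1$. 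Iterating yields $\mathrm{type}\,(Y,X)\geq n$. The main obstacle is organizing the cover and the $G$-action so that the averaging actually produces a cochain pulled back from $Y$; once the indexing by $(j,\nu)$ is set up this is routine, but it is the one place where the normality of the covering is essential.
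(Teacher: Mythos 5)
Your proposal is correct and follows essentially the same route as the paper: pull back the expansion to see that $\{\widetilde{w}_j\}$ is again of type $n$ with $u_n(\widetilde{Y},\widetilde{V};\{\widetilde{w}_j\})=(\pi|_{\widetilde{Y}})^*u_n(Y,X;\{w_j\})$, and then conclude from the injectivity of $(\pi|_{\widetilde{Y}})^*$ on $H^1$. The only difference is that you spell out the deck-group averaging argument for that injectivity (and the induction via Lemma \ref{lem:shougai_1}), which the paper simply asserts.
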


\begin{proof}
Let 
\[
T_{jk} w_k = w_j+\sum_{|\alpha|\geq n+1}f_{kj, \alpha}\cdot w_j^\alpha. 
\]
be the expansion (\ref{eq:exp_of_Tw}) for the system $\{w_j\}$. Then, by pulling it back by $\pi$, we obtain 
\[
T_{jk} \widetilde{w}_k = \widetilde{w}_j+\sum_{|\alpha|\geq n+1}(\pi|_{\widetilde{U}_j})^*f_{kj, \alpha}\cdot \widetilde{w}_j^\alpha 
\]
on each $\widetilde{V}_{jk}$. 
Thus, $\{\widetilde{w}_j\}$ is a system of type $n$ and 
\[
u_n(\widetilde{Y}, \widetilde{V}; \{\widetilde{w}_j\})=
(\pi|_{\widetilde{Y}})^*u_n(Y, X; \{w_j\})
\]
holds. 
Therefore we obtain the lemma, since the map $(\pi|_{\widetilde{Y}})^*\colon H^1(Y, N_{Y/X}\otimes S^nN_{Y/X}^*)\to H^1(Y, N_{\widetilde{Y}/\widetilde{V}}\otimes S^nN_{\widetilde{Y}/\widetilde{V}}^*)$ is injective. 
\end{proof}



\section{Proof of Corollary {\ref{cor:main}}}

As $D_\lambda$'s intersect to each other transversally along $C$, it follows that 
$N_{C/X}=\textstyle\bigoplus_{\lambda=1}^{n-1}N_\lambda$ and  $N_{C/D_1}=\textstyle\bigoplus_{\lambda=2}^{n-1}N_\lambda$, where $N_\lambda:=N_{D_\lambda/X}|_Y$ for each $\lambda=1, 2, \dots, n-1$. 
Note that each $N_\lambda$ is isomorphic to $L|_C$ and thus it is an element of $\mathcal{E}^{(1)}_1(C)$. 
Thus, as $N_{C/X}\cong \oplus^rL|_C\in \mathcal{E}^{(r)}_1(C)$ and 
$H^1(C, N_\lambda\otimes N_\alpha^{-1})\cong H^1(C, L|_C^{-|\alpha|+1})=0$ 
hold, it follows that $(X, S:=D_1, Y:=C)$ satisfies the assumption of Theorem \ref{thm:main} $(ii)$. 
Therefore we obtain that the line bundle $L=[D_1]$ admits a unitary flat metric on a neighborhood $V$ of $C$. 
By considering the (regularized) minimum of this unitary flat metric on $L|_V$ and the Bergman type singular Hermitian metric $(\textstyle\sum_{\lambda=1}^{n-1}h_\lambda^{-1})^{-1}$, we can construct a $C^\infty$ Hermitian metric on $L$ with semi-positive curvature 
($h_\lambda$ is a singular Hermitian metric on $L$ with $|f_\lambda|_{h_\lambda}^2\equiv 1$, where $f_\lambda\in H^0(X, L)$ is a section with ${\rm div}(f_\lambda)=D_\lambda$, see also \cite[Corollary 3.4]{K2} for the regularized minimum construction). 



\section{Examples}

\subsection{Deformation spaces of projective manifolds}

Let $B$ be a domain of $\mathbb{C}^r$ which includes the origin and $\pi\colon X\to B$ be a deformation of projective manifolds: i.e. $X$ is a holomorphic manifold of dimension $n+r$ and $\pi$ is a proper holomorphic surjective submersion whose fiber $\pi^{-1}(x)$ is a projective manifold of dimension $n$ for each $x\in X$. 
Denote by $Y$ the central fiber $\pi^{-1}(0)$. 
Let us assume that $Y$ is a smooth fiber for simplicity. 
In this case, $N_{Y/X}$ is holomorphically trivial. 

Take a coordinate $x=(x^1, x^2, \dots, x^r)$ of $\mathbb{C}^r$. 
Then, by considering a global defining functions system $w:=\{w^\lambda:=\pi^*x^\lambda\}$ of $Y$, 
it is easily seen that the pair $(Y, X)$ is of infinite type. 
In this case, Theorem \ref{thm:main} $(i)$ is easily checked. 
Indeed, the foliation $\mathcal{F}$ in this case is the one which is induced by the fibration $\pi$. 
In what follows, we give a simple proof of Theorem \ref{thm:main} $(ii)$ for this fundamental example. 

Let $S\subset X$ be a non-singular hypersurface such that $Y\subset S$ and $N_{Y/S}$ is unitary flat. Let us consider the line bundle $[S]$. 
It holds that $[S]|_Y=[S]|_S|_Y=N_{S/X}|_Y$. 
As we have already mentioned in \S \ref{section:initial_w_j_constr}, it follows from Remark \ref{rmk:unitary_flat_subbundle} and the complete reducibility of the unitary representations that $N_{S/X}|_Y$ is unitary flat line bundle. 
Therefore, $[S]|_Y$ is unitary flat and thus it is topologically trivial. 
As the first Chern class $c_1([S]|_{\pi^{-1}(x)})$ depends continuously on $x$, it holds that $[S]|_L$ is also topologically trivial for each leaf $L$ of $\mathcal{F}$. 
Assume that $L\not\subset S$. 
Then, as the divisor $S|_L$ is an effective divisor on a projective manifold such that the corresponding line bundle is topologically trivial, it follows that $S\cap L=\emptyset$. 
Therefore we obtain that $S=\pi^{-1}(\overline{S})$ holds, where $\overline{S}:=\pi(S)$. 
By shrinking $B$ and choosing appropriate $x$, 
we may assume that $\overline{S}=\{x=(x^1, x^2, \dots, x^r)\in B\mid x^1=0\}$. 
Then we can construct the foliation $\mathcal{G}_S$ as in Theorem \ref{thm:main} $(ii)$ by considering ``$w^1=$(constant)". 

\subsection{Projective bundles}

Let $M$ be a compact complex manifold and $E$ be a holomorphic vector bundle on $M$ of rank $r+1$. 
Assume that there exists a subbundle $F\subset E$ of rank $r$ such that $F$ is a unitary flat vector bundle and the quotient bundle $L:=E/F$ is the holomorphically trivial line bundle. 
In this subsection, we consider the projective bundle $X:=\mathbb{P}(E)$ and the section $Y\subset X$ of $\pi\colon X\to M$ defined by the natural map $p\colon E\to L$. 

Fix an open covering $\{U_j\}$ of $M$ and take a local frame $(e_j^1, e_j^2, \dots, e_j^r)$ of $F$ with $e_j^\lambda=\textstyle\sum_{\mu=1}^r(S_{jk}^{-1})^\lambda_\mu\cdot e_k^\mu$, or equivalently, $e_{j, \lambda}^*=\textstyle\sum_{\mu=1}^r(S_{jk})_\lambda^\mu\cdot e_{k, \mu}^*$ on each $U_{jk}$ ($S_{jk}\in U(r)$). 
By extending these appropriately, we obtain a local frame $e_j=(e_j^0, e_j^1, \dots, e_j^r)$ of $E$ with $e_j^*=\widetilde{S}_{jk}e_k^*$ on each $U_{jk}$, where 
\[
\widetilde{S}_{jk}:=\left(
    \begin{array}{c|ccc}
       1&a_{jk, 1}&\cdots&a_{jk, r}\\ \hline
       0&&& \\
	 \vdots&&S_{jk}& \\
	 0&&&
    \end{array}
  \right). 
\]
Here the function $a_{jk, \lambda}$ is a holomorphic function defined on $U_{jk}$ for each $\lambda$. 
Fix a neighborhood $V_j$ of $\pi^{-1}(U_j)\cap Y$ in $\pi^{-1}(U_j)$ and a coordinate $z_j$ of $U_j$. 
For each $w_j=(w_j^1, w_j^2, \dots, w_j^r)\in\mathbb{C}^r$, consider the map 
\[
(z_j, w_j)\mapsto \left[e_{j, 0}^*(z_j)+\sum_{\lambda=1}^rw_j^\lambda\cdot e_{j, \lambda}^*(z_j)\right]
\]
and regard $(z_j, w_j)$ as a coordinates system of $V_j$ by this map. 
Then we obtain 
\[
w_k^\mu=\frac{\sum_{\lambda=1}^r(S_{jk})_\lambda^\mu\cdot w_j^\lambda}{1+\sum_{\lambda=1}^ra_{jk, \lambda} w_j^\lambda}
=\sum_{\lambda=1}^r(S_{jk})_\lambda^\mu\cdot w_j^\lambda-\sum_{\lambda=1}^r\sum_{\nu=1}^r(S_{jk})_\lambda^\mu\cdot a_{jk, \nu}w_j^\lambda w_j^\nu+O(|w_j|^3), 
\]
and thus 
\begin{eqnarray}
\sum_{\mu=1}^r(S_{jk}^{-1})^p_\mu \cdot w_k^\mu
&=&w_j^p-
\sum_{\nu=1}^r a_{jk, \nu}w_j^p w_j^\nu+O(|w_j|^3)\nonumber
\end{eqnarray}
on each $V_{jk}$, 
which can be regarded as the expansion (\ref{eq:exp_of_Tw}) for the local defining functions system $\{w_j\}$ with the transition matrix $T_{jk}:=S_{jk}^{-1}$ of $N_{Y/X}$ (Note that $N_{Y/X}^*\cong F$). 

Set $\varepsilon_j:=dw_j$. Then it follows from the above expansion that the first obstruction class $u_1(Y, X; \{w_j\})$ is defined by 
\[
-\sum_{p=1}^r\sum_{\nu=1}^r a_{jk, \nu}\cdot \varepsilon_{j, p}^*\otimes\varepsilon_j^p\cdot \varepsilon_j^\nu. 
\]
On the other hand, it follows from the arguments as in Lemma \ref{lem:nice_ext} that 
$-\textstyle\sum_{\nu=1}^r a_{jk, \nu}\cdot \varepsilon_j^\nu$ can be regarded as the extension class $\delta(1)\in H^1(Y, N_{Y/X}^*) (\cong H^1(M, L^{-1}\otimes F))$ of the short exact sequence $0\to F\to E\to L\to 0$. 
Thus we can conclude that $u_1(Y, X)$ is the image of ${\rm id}_{N_{Y/X}}\otimes\delta(1)\in H^1(Y, {\rm End}(N_{Y/X})\otimes N_{Y/X}^*)\cong H^1(Y, N_{Y/X}\otimes N_{Y/X}^*\otimes N_{Y/X}^*)$ by the map 
\[
H^1(Y, N_{Y/X}\otimes N_{Y/X}^*\otimes N_{Y/X}^*)\to H^1(Y, N_{Y/X}\otimes S^2N_{Y/X}^*) 
\]
induced from the natural map $N_{Y/X}^*\otimes N_{Y/X}^*\to S^2N_{Y/X}^*$. 
In what follows, we regard each section of $S^2N_{Y/X}^*$ as a symmetric section of $N_{Y/X}^*\otimes N_{Y/X}^*$ and $S^2N_{Y/X}^*$ as a unitary flat subbundle of $N_{Y/X}^*\otimes N_{Y/X}^*$. 
Then the natural map $N_{Y/X}^*\otimes N_{Y/X}^*\to S^2N_{Y/X}^*$ can be regarded as the map $\varepsilon_j^\mu\otimes \varepsilon_j^\nu\mapsto {\rm Sym}(\varepsilon_j^\mu\otimes \varepsilon_j^\nu):=\textstyle\frac{1}{2}\left(\varepsilon_j^\mu\otimes \varepsilon_j^\nu+\varepsilon_j^\nu\otimes \varepsilon_j^\mu\right)$. 
By using this, it clearly holds that $u_1(Y, X)=0$ iff $s_*(\delta(1))=0\in H^1(Y, N_{Y/X}\otimes N_{Y/X}^*\otimes N_{Y/X}^*)$ holds, where $s_*$ is the map induced from 
\[
s\colon N_{Y/X}^*\to N_{Y/X}\otimes N_{Y/X}^*\otimes N_{Y/X}^*:\ 
\varepsilon_j^\nu\mapsto 
\frac{1}{2}\sum_{\mu=1}^r\varepsilon_{j, \mu}^*\otimes \left(\varepsilon_j^\mu\otimes \varepsilon_j^\nu+\varepsilon_j^\nu\otimes \varepsilon_j^\mu\right). 
\]
As we can regard $N_{Y/X}^*$ as a unitary flat subbundle (and thus a direct component by the complete reducibility of the unitary representation) of $N_{Y/X}\otimes N_{Y/X}^*\otimes N_{Y/X}^*$ via $s$, we can conclude that 
$u_1(Y, X)=0$ holds iff $\delta(1)=0$ holds, or equivalently, $0\to F\to E\to L\to 0$ splits. 

Therefore, it holds that the pair $(Y, X)$ is of type $1$ if $0\to F\to E\to L\to 0$ does not split. 
When $0\to F\to E\to L\to 0$ splits, it can be easily seen that $T_{jk}w_k=w_j$ holds on each $V_{jk}$, which shows that the pair $(Y, X)$ is of infinite type in this case. 

\subsection{The blow up of a del Pezzo manifold at a general point (Proof of Corollary \ref{cor:dPmfd_metric})}\label{section:dPmfd}

Let  $(V, L)$ be a del Pezzo manifold of degree $1$: 
i.e. $V$ is a projective manifold of dimension $n$ and $L$ is an ample line bundle on $V$ with $K_V^{-1}\cong L^{n-1}$ and the self-intersection number $(L^n)$ is equal to $1$. 
From \cite[6.4]{F}, it holds that ${\rm dim}\,H^0(V, L)=n$. 
Take general elements $D_1, D_2, \dots, D_n\in |L|$. 
By \cite[4.2]{F} and $(D_1, D_2, \dots, D_n)=(L^n)=1$, it holds that 
the intersection $\textstyle\bigcap_{\lambda=1}^nD_\lambda$ is a point, which we denote by $p$. It is clear that $D_\lambda$'s intersect each other transversally at $p$. 
From this fact and Bertini's theorem, we may assume that each $D_\lambda$ is non-singular. 

Consider an sequence of the subvarieties $V_n:=V, V_{n-1}:=D_1, V_{n-2}:=D_1\cap D_2, \cdots, V_1:=D_1\cap D_2\cdots, \cap D_{n-1}$. 
Denote by $L_\lambda$ the restriction $L|_{V_\lambda}$ for each $\lambda=1, 2, \cdots, n-1$. 
Note that it follows from a simple inductive argument that $(V_\lambda, L_\lambda)$ is also a del Pezzo manifold of degree $1$ for each $\lambda$. 
Especially, for $\lambda=1$, it holds that $V_1$ is an elliptic curve and ${\rm deg}\,L_1=1$. 
Take $q\in V_1$ and denote by $\pi\colon X\to V$ the blow-up at $q$. 
Let us denote by $E$ the exceptional divisor, by $\widetilde{D}_\lambda$ the strict transform $(\pi^{-1})_*D_\lambda$, and by $Y$ the strict transform $(\pi^{-1})_*V_1$. 
Then it is clear that 
$\widetilde{D}_1, \widetilde{D}_2, \dots, \widetilde{D}_{n-1}\in |\widetilde{L}|$, where $\widetilde{L}:=\pi^*L\otimes\mathcal{O}_X(-E)$, and that $\widetilde{D}_\lambda$'s intersect each other transversally along $Y$. 
Thus we can apply Corollary \ref{cor:main} to this example to obtain Corollary \ref{cor:dPmfd_metric}.

\subsection{An example of an infinite type pair which does not admit $\mathcal{F}$ as in Theorem \ref{thm:main}}

In \cite[\S 5.4]{U}, Ueda constructed a pair $(C, S)$ of a surface $S$ and a compact curve $C$ of genus $g\geq 1$ embedded in $S$ with unitary flat normal bundle such that $(C, S)$ is infinite type, 
however there does not exist a foliation $\mathcal{F}$ as in Theorem \ref{thm:main}. 
Here we will construct such a pair for the case where the codimension $r$ is greater than $1$. 

Let $(C, S)$ be as above with $g=1$. 
By shrinking $S$ to a tubular neighborhood of $C$ if necessary, we may assume $\pi_1(C, *)\cong \pi_1(S, *)$. 
Denote by $\rho:=\rho_{N_{C/S}}$ the unitary representation of $\pi_1(C, *)$ corresponding to the unitary flat line bundle $N_{C/S}$ and 
by $\widetilde{S}\to S$ the universal covering of $S$. 
Set $X:=\widetilde{S}\times \mathbb{C}^{r-1}/\sim$, where $\sim$ is the relation defined by 
\[
(z, (v_1, v_2, \dots, v_{r-1}))\sim (\gamma z, (\rho(\gamma)v_1, \rho(\gamma)v_2, \dots, \rho(\gamma)v_{r-1}))
\]
for each $(z, (v_1, v_2, \dots, v_{r-1}))\in \widetilde{S}\times\mathbb{C}^{r-1}$ and $\gamma\in\pi_1(S, *)$. 
We denote by $Z$ the submanifold $\widetilde{S}\times \{(0, 0, \dots, 0)\}/\sim$ of $X$ and 
by $Y$ the submanifold $\widetilde{C}\times \{(0, 0, \dots, 0)\}/\sim$ of $Z$, 
where $\widetilde{C}\subset\widetilde{S}$ is the universal covering of $C$. 
Note that $(Y, Z)$ is naturally isomorphic to $(C, S)$. 
As $N_{Y/X}\cong N_{C/S}^{\oplus r}$ holds and $N_{C/S}$ is non-torsion, $H^1(Y, N_{Y/X}\otimes S^nN_{Y/X}^*)=0$ holds for each $n\geq 2$. 
Therefore we obtain that $(Y, X)$ is of infinite type. 

Assume that there exists a local defining functions system $\{(V_j, w_j)\}$ of $Y$ with $w_j=T_{jk}w_k$ on each $V_{jk}$ ($T_{jk}\in U(r)$), where $\{V_j\}$ is as in \S \ref{section:notation}. 
Take a local frame $\{e_j\}$ of $N_{Y/X}^*$ such that $e_j=t_{jk}e_k$ on each $U_{jk} (=Y\cap V_{jk})$, 
where $t_{jk}\in U(1)$ is a transition function for some local frames of $N_{C/S}^*$. 
Let $A_j\colon U_j\to GL_r(\mathbb{C})$ be a holomorphic function defined by $e_j=A_j\cdot dw_j|_{U_j}$. 
By considering $\{(U_j, A_j)\}$ as a global section of the vector bundle ${\rm End}\,(N_{Y/X}^*)$, it follows from Lemma \ref{lem:unitary_section_const} that each $A_j$ is a constant function (see also \cite[\S 1 Proposition 1]{Se}). 
Thus, by replacing $w_j$ with $A_j\cdot w_j$, we may assume that $T_{jk}={\rm diag}\, (t_{jk}, t_{jk}, \dots, t_{jk})$. 
For a fixed index $j_0$, it is clear that $w_{j_0}^\lambda|_{V_{j_0}\cap Z}\not\equiv 0$ for some $\lambda$. 
Without loss of generality, we may assume $\lambda=1$. 
Set $f_j:=w_j^1|_{V_j\cap Z}$ for each $j$. 
Then, as $f_j=t_{jk}f_k$ on each $V_{jk}\cap Z$ and $f_{j_0}\not\equiv 0$, 
we obtain that $f_j\not\equiv 0$ for any $j$ and that the divisors ${\rm div}(f_j)$ glue up to define a divisor $D$ of $V\cap Z$ ($V=\textstyle\cup_jV_j$). 
Let $D=aY+\textstyle\sum_{\nu=1}^\ell b_\nu W_\nu$ be the irreducible decomposition of $D$ ($a, b_\nu>0$, note that we may assume $\ell<\infty$ by shrinking $V$ if necessary). 
As the line bundle $[D]$ is unitary flat, the intersection number $(D, Y)$ can be computed as follows: $(D, Y)={\rm deg}\,[D]|_Y=0$. 
The self-intersection number $(Y, Y)$ is also equal to $0$, since $(Y, Y)={\rm deg}\, N_{Y/Z}={\rm deg}\, N_{C/S}$. 
Therefore it holds that $(W_\nu, Y)=0$ for each $\nu$, which means that we may assume that $D=aY$ by shrinking $V$ if necessary. 
Thus it holds that the system $\{(V_j\cap Z, f_j)\}$ induces a foliation $\mathcal{F}$ on a neighborhood of $C$ in $S$ as in Theorem \ref{thm:main}, which contradicts to the property of the pair $(C, S)$. 



\section{discussion}

In this section, we list some remaining problems. 

In \cite[\S 3]{U}, the neighborhood structure of $Y$ is investigated also for the pair $(Y, X)$ of finite type for the case where $r=1$. 
According to \cite[Theorem 1]{U}, $Y$ admits a fundamental system of strongly pseudoconcave
neighborhoods. 
As an analogy, it seems to be natural to ask the following question for example. 

\begin{question}
Let $X$ be a compact complex manifold and $Y\subset X$ be a compact complex submanifold with unitary flat normal bundle such that the pair $(Y, X)$ is of type $n<\infty$. 
What kind of psh functions do there exist on $X\setminus Y$? 
\end{question}

One of the most interesting application of \cite[Theorem 1]{U} is the classification of the pairs $(Y, X)$ of finite type such that $X$ is a projective surface and $Y$ is an elliptic curve \cite[\S 6]{N}. We are also interested in a higher dimensional analogy of this result: 

\begin{question}
Classify the pairs $(Y, X)$ of finite type such that $X$ is a projective manifold and $Y$ is an elliptic curve with unitary flat normal bundle. 
\end{question}

We are also interested in some concrete examples. 
In our context, the example of the blow-up of $\mathbb{P}^2$ at nine points is one of the most interesting examples, see \cite{A}, \cite{B}, and \cite[\S 1]{D}. 
The example we treated in \S \ref{section:dPmfd} is a natural generalization of this example. 
From this point of view, it seems to be natural to ask the following: 

\begin{question}[higher dimensional analogue of {\cite[Question 1.2]{K3}}]
Let $(V, L), C=V_1, q, Y, X$ be as in Corollary \ref{cor:dPmfd_metric} and \S \ref{section:dPmfd}. 
Is there a point $q\in C$ such that $K_X^{-1}$ admits no $C^\infty$ Hermitian metric with semi-positive curvature, 
or that $Y$ does not admit a pseudoflat neighborhood system? 
\end{question}

In \cite{K}, we studied the neighborhood structure of a submanifold $Y$ of $X$ with codimension $r=2$. 
Under the assumption of the existence of a hypersurface $S$ of $X$ with unitary flat normal bundle which includes $Y$ as a submanifold, we posed the obstruction classes $u_{n, m}(Y, S, X)\in H^1(N_{S/X}|_Y^{-n}\otimes N_{Y/S}^{-m})$. 
Thereafter, we found a mistake in the proof of \cite[Theorem 1]{K}, 
which is corrected as \cite[Theorem 1.4]{KO} by using a new obstruction classes 
$v_{n, m}(Y, S, X)\in H^1(N_{S/X}|_Y^{-n}\otimes N_{Y/S}^{-m+1})$. 

\begin{question}
What is the relation between $(u_{n, m}(Y, S, X), v_{n, m}(Y, S, X))$ and $u_n(Y, X)$ we defined in \S \ref{section:def_of_obstr_class} ? 
\end{question}




\end{document}